\newcommand{\R}{{\mathbb R}}
\newcommand{\C}{{\mathbb C}}
\newcommand{\N}{{\mathbb N}}
\newcommand{\Z}{{\mathbb Z}}
\newcommand{\Rep}{\operatorname{Re}}
\def\bT{{\mathbf T}}
\def\bP{{\mathbf P}}
\def\bW{{\mathbf W}}
\def\bR{{\mathbf R}}
\def\bI{{\mathbf I}}
\def\bD {\operatorname{\mathbf D}}
\def\bPi{\operatorname{\mathbf \Pi}}
\newtheorem {thm}{Theorem}[section]
\newtheorem {lem}[thm]{Lemma}
\newtheorem {cor}[thm]{Corollary}
\theoremstyle{defintion}
\newtheorem {df}[thm]{Definition}
\theoremstyle{remark}
\newtheorem{rem}[thm]{Remark}
\theoremstyle{example}
\theoremstyle{assumption}
\def\SS{{\mathbb S}}
\def\lbl{\label}
\def\be{\begin{equation}}
\def\ee{\end{equation}}
\def\p{\partial}
\def\wlim{\mathop{\operatorname{lim*}}_{\lambda\to 0+}}
\def\l{\lim_{\lambda\to 0+}}
\newcommand{\1}{{i\mkern1mu}}
\title{The mean field analysis of the Kuramoto model on graphs II.\\
Asymptotic stability of the incoherent state, center manifold reduction,
and bifurcations}
\author{Hayato Chiba\thanks{Institute of Mathematics for Industry, 
Kyushu University / JST PRESTO, Fukuoka,
819-0395, Japan, {\tt chiba@imi.kyushu-u.ac.jp}}\;
and Georgi S. Medvedev\thanks{Department of Mathematics, 
Drexel University, 3141 Chestnut Street, Philadelphia, PA 19104,
{\tt medvedev@drexel.edu}} 
}
\begin{document}
\maketitle

\begin{abstract}
In our previous work \cite{ChiMed16},  we initiated a mathematical investigation of the
onset of synchronization in the Kuramoto model (KM) of coupled phase oscillators
on convergent graph sequences. There, we derived and rigorously justified
the mean field limit for the KM on graphs. Using linear stability analysis, we identified
the critical values of the coupling strength, at which the incoherent state looses stability,
thus, determining the onset of synchronization in this model.

In the present paper, we study the corresponding bifurcations. Specifically, we show that similar
to the original KM with all-to-all coupling, the onset of synchronization in the KM on graphs
is realized via a pitchfork bifurcation. The formula for the stable branch of the 
bifurcating equilibria involves the principal eigenvalue and the corresponding eigenfunctions
of the kernel operator defined by the limit of the graph sequence used in the model. This establishes
an explicit link between the network structure and the onset of synchronization in the KM on graphs.
The results of this work are illustrated with the bifurcation analysis of the KM on Erd\H{o}s-R{\' e}nyi,
small-world, as well as certain weighted graphs on a circle. 
\end{abstract}

\section{Introduction}
\setcounter{equation}{0}

In 1970s, a prominent Japanese physicist Yoshiki Kuramoto described 
a remarkable effect in collective dynamics of large systems of coupled 
oscillators \cite{Kur75}.  He studied all-to-all coupled phase oscillators
with randomly distributed intrinsic frequencies, the model which now 
bears his name. When the strength of coupling is small, the phases are 
distributed approximately uniformly around a unit circle, forming an 
incoherent state. Kuramoto identified a critical value of the coupling 
strength, at which the the incoherent state looses stability giving
rise to a stable partially synchronized state. To describe the bifurcation
corresponding to the onset of synchronization, Kuramoto introduced the 
order parameter - a scalar function, which measures the degree of coherence
in the system. He further showed that the order parameter undergoes 
a pitchfork bifurcation. Thus, the qualitative changes in the statistical behavior
of a large system of coupled phase oscillators near the onset to synchronization
can be described in the language of the bifurcation theory.

Kuramoto's discovery created a new area of research in nonlinear science
\cite{StrMir91, StrMir92, Str00}. The rigorous mathematical treatment 
of the pitchfork bifurcation in the KM was outlined in \cite{ChiNis2011} and
was presented with all details in \cite{Chi15a}. The analysis in these papers
is based on the generalized spectral theory for linear operators 
\cite{Chi15b} and applies to the KM with intrinsic frequencies
drawn from a distribution with analytic or rational probability density function.
Under more general assumptions on the density, the onset of synchronization
in the KM was analyzed in \cite{Die16,FGG16,Die17}. These papers use 
analytical methods for partial differential equations and build upon a 
recent breakthrough in the analysis of Landau damping \cite{MouVil2011}.

In our previous work \cite{ChiMed16},  we initiated a mathematical study of the 
onset of synchronization in the KM on graphs. Following \cite{Med14a, Med14b},
we considered the KM on convergent families of deterministic and random
graphs, including Erd\H{o}s-R{\' e}nyi and small-world graphs among many other 
graphs that come up
in applications. For this model, we derived and rigorously justified the mean field
limit. The latter is  a partial differential equation approximating the dynamics of the coupled oscillator
model in the continuum limit as the number of oscillators grows to infinity. In \cite{ChiMed16},
we performed a linear stability analysis of the incoherent state and identified the 
boundaries of stability. Importantly, we related the stability region of the
incoherent state to the structural properties of the network through the spectral properties
of the kernel operator defined by the limit of the underlying graph sequence 
\cite{LovGraphLim12}. In the present paper, we continue the study initiated in \cite{ChiMed16}.
Here, we analyze the bifurcations at the critical values of the coupling strength $K_c^-<0<K_c^+$, where 
the incoherent state looses stability. As in the analysis of the original KM in \cite{Chi15a},
we have to deal with the fact that for $K\in [K_c^-, K_c^+]$ the linearized operator
has continuous spectrum on the imaginary axis and no eigenvalues. Thus, neither the asymptotic stability 
of the incoherent state, nor the center manifold reduction for $K=K^\pm_c$ can be performed 
using standard methods of the bifurcation theory. To overcome this problem, following \cite{Chi15a}, 
we develop the generalized spectral theory, which allows effective analysis of the bifurcations at $K_c^\pm$.
This involves generalizing the resolvent operator and using it to  define generalized eigenvalues.
The generalized spectral theory is used i) to prove asymptotic stability of the incoherent state 
as a steady state solution of the linearized equation for $K\in [K_c^-,K_c^+]$; ii) to obtain finite-dimensional
center manifolds at the critical values of the coupling strength $K=K_c^\pm$; iii) to identify the 
bifurcations at $K=K_c^\pm$. We do not develop nonlinear estimates in this paper. We also do not 
prove the existence of the center manifold rigorously. Both problems can be dealt with following the 
lines of the analysis in \cite{Chi15a}. These very technical questions are left out to keep the length of this 
paper reasonable. Instead, we focus on the analysis of the bifurcations at $K^\pm_c$. To this end, we develop 
all necessary tools for setting up the bifurcation problems, which are analyzed formally assuming the existence
of the center manifold. The center manifold reduction yields stable spatial patterns arising from the bifurcations
at the critical values of the coupling strength. We apply these results to the KM on  several representative graphs,
including small-world and Erd\H{o}s-R{\' e}nyi graphs. Our results for the KM on these and many other graphs
are verified numerically in the follow-up paper \cite{ChiMedMiz18}.
In the remainder of this section, we review the 
model and the main outcomes of \cite{ChiMed16} and explain the main results of this 
work.

We begin with a brief explanation of the graph model that was used in \cite{ChiMed16}
and will be used in this paper. In \cite{ChiMed16}, we adapted the construction of
W-random graphs \cite{LovSze06} to define a convergent sequence of graphs.
The flexible framework of W-random graphs allows us to deal with a broad class
of networks that are of interest in applications.
Let $W$ be a symmetric measurable function on the unit square $I^2:=[0,1]^2$
with values in $[-1,1]$ and let
$
X_n=\{\xi_{n1}, \xi_{n2},\dots,\xi_{nn}\}
$
form a triangular array of points $\xi_{ni},\; i\in [n]:=\{1,2,\dots n\}, \; n\in \N,$ 
subject to the following condition
\be\lbl{wXn}
\lim_{n\to\infty} n^{-1}\sum_{i=1}^n f(\xi_{ni})= \int_I f(x)dx \quad
\forall f\in C(I).
\ee
$\Gamma_n=\langle V(\Gamma_n), E(\Gamma_n), (W_{nij})\rangle$ 
is a weighted graph on $n$ nodes labeled by the integers from $[n]$, whose edge set 
is 
$$
E(\Gamma_n)=\left\{ \{i,j\}:\;  W(\xi_{ni},\xi_{nj})\neq 0,\; i,j\in [n]\right\}.
$$
Each edge $\{i,j\}\in E(\Gamma_n)$ is supplied with the weight $W_{nij}:=W(\xi_{ni}, \xi_{nj}).$
In the theory of graph limits, $W$ is called a graphon \cite{LovGraphLim12}. 
It defines the asymptotic properties of $\{\Gamma_n\}$ for large $n$.

Consider the initial value problem (IVP) for the  KM on $\Gamma_n$ 
\begin{eqnarray}
\lbl{KM}
\dot \theta_{ni} &=& \omega_{i} + Kn^{-1} \sum_{j=1}^n W_{nij} \sin(\theta_{nj}-\theta_{ni}),
\quad i\in [n],\\
\lbl{KM-ic}
\theta_{ni}(0)&=&\theta_{ni}^0. 
\end{eqnarray}
The intrinsic frequencies $\omega_i, \; i\in [n],$ are independent identically distributed 
random variables. The distribution of $\omega_1$ has density $g(\omega)$. 
For the spectral analysis in Section~\ref{sec.spectral}, we need to impose the 
following assumptions on $g$: a) $g:\R\to\R^+\cup \{0\}$ 
is an even unimodal function, and b) $g$ is real analytic function with finite 
moments of all orders: $\int_\R |x|^m g(x)dx <\infty,$ $m\in \N$. For instance,
the density of the Gaussian distribution satisfies these conditions.  
The KM on weighted graphs $\{\Gamma_n\}$ \eqref{KM}, \eqref{KM-ic} can be used
to approximate
the KM on a variety of random graphs (cf. \;\S~4.2 \cite{ChiMed16}).

Along with the discrete model \eqref{KM} we consider the IVP for the 
following partial differential equation
\begin{eqnarray}
\lbl{MF}
{\p \over \p t} \rho(t,\theta,\omega,x) +
{\p \over \p \theta }\left\{\rho(t,\theta,\omega,x) V(t,\theta,\omega,x)\right\}
&=&0,\\
\lbl{MF-ic}
\rho(0,\theta,\omega, x) &=& \rho^0(\theta,\omega,x)\in \SS\times\R\times I,
\end{eqnarray}
where
\be\lbl{def-V}
V(t,\theta,\omega,x)=\omega+
K\int_I\int_\R\int_\SS W(x,y) \sin(\phi - \theta ) \rho(t,\phi, \omega ,y) g(\omega ) 
d\phi d\omega d y.
\ee
Here, $\rho(t,\theta,\omega,x)$ is the conditional density of the random vector $(\theta,\omega)$
given $\omega$, and parametrized by $(t,x)\in \R^+\times I$, and $\SS = \R/2\pi \Z$ is a circle.
In particular,
\be\lbl{normalize-g}
\int_\SS \rho(t,\theta,\omega, x) d\theta =1 \quad \forall (t,\omega,x)\in \R^+\times \R\times I.
\ee
It is shown in \cite[Theorem~2.2]{ChiMed16} that 
\be\lbl{EMeasure}
\mu_t^n(A)=n^{-1} \sum_{i=1}^n \delta_{(\theta_{ni}(t), \omega_i, \xi_{ni})} (A) 
\ee
interpreted as a probability measure on Borel sets $A\in\mathcal{B}(G),$
$G=\SS\times\R\times I,$ converges in the bounded Lipschitz distance \cite{Dud02}
uniformly on bounded time intervals to the absolutely continuous measure
\be\lbl{CMeasure}
\mu_t(A)=\int_A \rho(t,\theta,\omega, x)g(\omega ) d\theta d\omega dx, \quad A\in\mathcal{B}(G),
\ee
provided $\mu_0^n$ and $\mu_0$ are sufficiently close in the same distance.
The latter can be achieved with the appropriate initial condition \eqref{KM-ic} and sufficiently
large $n$ (see \cite[Corollary~2.3]{ChiMed16}).
Therefore, the IVP \eqref{MF},\eqref{MF-ic} approximates the IVP \eqref{KM},\eqref{KM-ic}
on finite time intervals for sufficiently large $n$.

An inspection of \eqref{MF} shows that $\rho_u=1/(2\pi)$, the density of the uniform distribution on $\SS$,
is a steady state solution of \eqref{MF}. It corresponds to the incoherent (mixing) state 
of the KM. Numerics suggests that the incoherent state is stable for small $K\ge 0.$ The loss
of stability of the incoherent state is interpreted as the onset of synchronization in the KM.
This is the main focus of \cite{ChiMed16} and of the present paper. In \cite{ChiMed16}, we identified
the boundaries of the region of stability of the incoherent state in \eqref{MF}. Specifically,
we showed that there exist $K_c^-\le 0\le K_c^+$ such that $\rho_u$ is linearly stable
for $K\in [K^-_c, K^+_c]$, and is unstable otherwise. 

The critical values $K_c^-$ and $K_c^+$ depend on the network topology through the 
eigenvalues of the compact symmetric operator $\mathbf{W}:L^2(I)\to L^2(I)$ 
\be\lbl{def-W}
\bW[f](x)=\int_\R W(x,y) f(y) dy.
\ee
The eigenvalues of $\bW$ are real with the only accumulation point at $0$.
Denote the largest positive and smallest negative eigenvalues of $\bW$
by $\mu_{max}$ and $\mu_{min}$ respectively.
If all eigenvalues are nonnegative (nonpositive), we set $\mu_{min}=-\infty$ 
($\mu_{max}=\infty$). The main stability result of \cite{ChiMed16} yields
explicit expressions for the transition points
\be\lbl{wKc}
K_c^-={2\over \pi g(0) \mu_{min}}
\quad\mbox{and}\quad
K_c^+={2\over \pi g(0) \mu_{max}}.
\ee
Thus, the region of linear stability of $\rho_u$ depends explicitly on the spectral properties 
of the limiting graphon $W$. Recall that $W$ represents the graph limit of $\{\Gamma_n\}$.
Thus, \eqref{wKc}  links network topology to synchronization in \eqref{KM}.
For the classical KM (all-to-all coupling), $W(x,y) = 1$ and $\mu_{min} = -\infty,\, \mu_{max} = 1$,
which recovers the known Kuramoto's formula.

In the present paper, we study the onset of synchronization in \eqref{KM} in more detail.
After some preliminaries and preparatory work in Sections \ref{sec.prelim} and \ref{sec.spectral},
we revisit linear stability of  the incoherent solution. This time, we show that despite
the lack of eigenvalues with negative real part and the presence of the continuous spectrum
on the imaginary axis, the incoherent state is an asymptotically stable solution of the 
linearized problem (cf. Theorem~\ref{thm.asympt}). This is a manifestation of the Landau 
damping in the KM.

In Section \ref{sec.bif}, we study the bifurcation at $K_c^+$ with a one-dimensional 
center manifold.
To this end, we recall the order parameter
\be\lbl{Worder}
h(t,x) =\int_I\int_\R\int_\SS W(x,y) e^{\1 \theta} \rho(t,\theta,\omega,y) g(\omega) 
d\theta d\omega dy,
\ee
which was introduced in \cite{ChiMed16} as a measure of coherence in the KM on graphs.
This is a continuous analog of the local order parameter
\begin{eqnarray*}
\frac{1}{n}\sum^n_{j=1}W_{nij}e^{i \theta _{nj}(t)}
\end{eqnarray*}
for the discrete model (\ref{KM}).
The order parameter generalizes the original order parameter used by Kuramoto for the 
all-to-all coupled model. Note that \eqref{Worder} depends on $x$ and contains information
about the structure of the network through $W$. As will be clear below, the order parameter 
plays an important role in the analysis of the mean field equation. In particular, it can be 
used to locate nontrivial steady state solutions. To this end note that the velocity field
\eqref{def-V} can be conveniently rewritten in terms of the order parameter 
\be\lbl{re-V}
V(t,\theta,\omega,x)=\omega+{K\over 2\1} \left(e^{-\1\theta} h(t,x)+e^{\1\theta}\overline{h(t,x)}\right).
\ee
In particular, for a given steady state of the order parameter written in the polar form 
\be\lbl{ss-order}
h_\infty(x)=R(x) e^{\1 \Phi(x)},
\ee
the velocity field takes the following form
\begin{eqnarray*}
V(t, \theta ,\omega ,x)
 = \omega - KR(x)\sin (\theta -\Phi (x)). 
\end{eqnarray*}
Setting $\partial_\theta \left( V \rho \right)=0$, we find the corresponding steady state
solution of \eqref{MF}
\begin{equation}\lbl{part-sync}
\rho (\theta ,\omega ,x) = \left\{ \begin{array}{ll}
\displaystyle \delta \bigl(\theta -\Phi(x)-\arcsin (\omega /KR(x)) \bigr), & |\omega |\le KR(x), \\
\displaystyle \frac{1}{2\pi} \frac{\sqrt{\omega ^2 - K^2 R(x)^2}}{| \omega - KR(x)\sin (\theta -\Phi (x))|}, 
  & |\omega |>KR(x),
\end{array} \right.
\end{equation}
where $\delta $ stands for the Dirac delta function. 
The stationary solution \eqref{part-sync} has the following interpretation:
the first line describes phase-locked oscillators, while  the second line yields the distribution of 
the drifting oscillators. Thus, solutions of this form may combine phase-locked oscillators and those 
moving irregularly. Such solutions are called partially phase-locked or partially synchronized.
The phase of a phase-locked oscillator at $x$ with a natural frequency $\omega$, 
is given by
\begin{equation}\label{lock}
\theta = \Phi (x) + \arcsin \left( \frac{\omega }{KR(x)}\right),
\end{equation}
provided $|\omega | \le KR(x).$
In Sections~\ref{sec.bif} and \ref{sec.bif2}, we will identify branches of stable equilibria
bifurcating from $K_c^\pm$ in terms of the corresponding values of the order parameter.
Then Equation~\eqref{lock} will be used to describe the corresponding stable phase-locked solutions.

 In Section~\ref{sec.bif}, assuming that $\mu_{max}$ is a simple eigenvalue
of $\mathbf{W},$ we show that the coupled system \eqref{KM} undergoes a supercritical 
pitchfork bifurcation at $K_c^+$. Specifically, we derive an ordinary differential equation
for the order parameter $h$ and show that the trivial solution of this equation looses
stability at $K_c^+$ and gives rise to a stable branch of (nontrivial) equilibria,
corresponding to partially synchronized state (cf.~\eqref{fork}). 
In Section~\ref{sec.bif2}, we consider the onset of synchronization in networks with certain symmetries
(cf.~\eqref{Wsymmetry}). This leads to the bifurcation with a two-dimensional center
manifold. The bifurcation analysis in Sections \ref{sec.bif} and
\ref{sec.bif2} is  illustrated with 
the analysis of the 
KM on Erd\H{o}s-R{\' e}nyi, small-world graphs, and to a class of weighted graphs on a circle.

\section{Preliminaries} \lbl{sec.prelim}
\setcounter{equation}{0}
In the remainder of this paper, we will assume that $K\ge 0$. The case of 
negative $K$ is reduced to that above by switching to $K:=-K$ and $W:=-W$.
Furthermore, without loss of generality we assume that $\mu_{max}>0$.
\subsection{Fourier transform}
We rewrite \eqref{MF} in terms of the complex Fourier coefficients
\be\lbl{Ftransform}
z_j=\int_\SS e^{\1 j\theta} \rho(t,\theta, \omega,x) d\theta, \quad j\in\Z.
\ee
Applying the Fourier transform to \eqref{MF} and using integration by parts, we obtain
\be\lbl{prep-F}
\begin{split}
{\p z_j\over \p t} &= -\int_\SS e^{\1 j\theta} 
{\p \over \p \theta} \left\{ V(t,\theta,\omega,x)
\rho(t,\theta,\omega,x)  \right\} d\theta\\
&=\1 j\int_\SS e^{\1 j\theta} V(t,\theta,\omega,x) 
\rho(t,\theta,\omega,x) d\theta\\
&=\1 j \omega z_j +{jK\over 2} \int_\SS e^{\1 j\theta}  
\left( e^{-\1\theta} h(t,x) - e^{\1 \theta} \overline{h(t,x)} \right) 
\rho(t,\theta,\omega,x) d\theta,
\end{split}
\ee
where
\be\lbl{corder}
\begin{split}
h(t,x) &=\int_I\int_\R\int_\SS W(x,y) e^{\1 \theta} \rho(t,\theta,\omega,y) g(\omega) 
d\theta d\omega dy\\
& =\int_I\int_\R W(x,y) z_1(t,\omega, y) g(\omega) d\omega dy.
\end{split}
\ee

From \eqref{prep-F}, we further obtain
\be\lbl{Fourier}
{\p z_j \over \p t} = \1 j \omega z_j + {jK\over 2}
\left(hz_{j-1} -\overline{h} z_{j+1}\right),\quad j\in \Z.
\ee
By \eqref{normalize-g}, $z_0=1$. Further, $z_{-j}=\bar z_j,$ because $\rho$ is real.
Thus, in \eqref{Fourier} we can restrict to $j\in\N$.

Let 
\be\lbl{def-P}
\begin{split}
\bP f(\omega, x) & = \int_\R \bW[f] (\omega, x)
g(\omega) d\omega\\
&=\int_I\int_\R W(x,y) f(\omega,y) g(\omega) d\omega dy.
\end{split}
\ee
Combining these observations, we rewrite \eqref{Fourier}:
\begin{eqnarray}\lbl{dZ1-1}
{\p\over \p t} z_1 &=& \1\omega z_1 +{K\over 2} \left( \bP z_1
-\overline{\bP z_1} z_2\right),\\
\lbl{dZj-1}
{\p\over \p t} z_j &=& \1 j\omega z_j +{jK\over 2} 
\left( \bP z_1z_{j-1}-\overline{\bP z_1} z_{j+1}\right),
\quad j=2,3,\dots.
\end{eqnarray}

Note that the trivial solution $Z:=(z_1,z_2,\dots)\equiv 0$ is a steady state
solution of \eqref{dZ1-1}, \eqref{dZj-1}. It corresponds to the uniform distribution
$\rho_u=1/(2\pi)$, a constant steady state solution of \eqref{MF}.
Linearizing around $Z\equiv 0,$ we arrive at
\begin{eqnarray}\lbl{dZ1}
{\p\over \p t} z_1 &=& \bT z_1,\\
\lbl{dZj}
{\p\over \p t} z_j &=& \1 j\omega z_j, 
\quad j=2,3,\dots,
\end{eqnarray}
where $\bT$ is a linear operator on $H:=L^2(\R\times I, g(\omega) d\omega dx)$
\be\lbl{def-T}
\bT f= \1\omega f +{K\over 2}\bP f.
\ee

\subsection{The eigenvalue problem}

The multiplication operator 
$\mathbf{M}_{\1\omega}: H\to H$ 
defined by
\be\lbl{def-M}
\mathbf{M}_{\1\omega} f=\1\omega f, \quad \omega \in \R
\ee 
is a closed operator. 
The continuous spectrum of $\mathbf{M}_{\1\omega}$ fills the imaginary axis
\be\lbl{cspectrum}
\sigma_c(\mathbf{M}_{\1\omega})=\1 \mathrm{supp}(g) =\1\R.
\ee
Since $\bP$ is compact (as a Hilbert-Schmidt operator), 
$\bT: H\to H$ is closed 
and  $\sigma_c(\bT)=\1\R$. 

Next we turn to the eigenvalue problem
\be\lbl{EV-T}
\bT f = \lambda f,
\ee
where $\bT$ and $\bP$ are operators on  
$H$ (cf.~\eqref{def-T} and \eqref{def-P}). 
 
We will locate the eigenvalues of $\bT$ through the eigenvalues of $\bW$ (cf.~\eqref{def-W}).
Since $\bW$ is a compact symmetric operator on $L^2(I)$, it has 
a countable set of real eigenvalues with the only accumulation point at zero.
All nonzero eigenvalues have finite multiplicity.

Suppose $\lambda$ is an eigenvalue of $\bT$ and 
$v\in H$ is the corresponding 
eigenfunction. Then a simple calculation yields (cf.~\cite{ChiMed16})
\be\lbl{link}
w={K\over 2} D(\lambda) \bW w,
\ee
where 
\begin{eqnarray}\lbl{def-D}
D(\lambda)&=&\int_\R {g(\omega) d\omega\over \lambda -\1\omega},\\
\lbl{eqn-w}
w &=& \int_\R v(\omega,\cdot) g(\omega)d \omega \in L^2(I).
\end{eqnarray}
Equation \eqref{link} yields the equation for eigenvalues of $\bT$
\be\lbl{character}
D(\lambda)={2\over K\mu},
\ee
where $\mu$ is a nonzero eigenvalue of $\bW$. 

Using \eqref{character}, we establish a one-to-one correspondence 
between the eigenvalues of $\bW$ and those of $\bT$. Specifically,
for every positive eigenvalue of $\bW$, $\mu$, there is a branch of eigenvalues
of $\bT$, 
\be\lbl{lambda-branch}
\lambda=\lambda(\mu,K), \quad K\ge K(\mu):={2\over \pi g(0)\mu},
\ee
such that 
\be\lbl{ends}
\lim_{K\to K(\mu)+0} \lambda(\mu,K) =0+,\quad
\lim_{K\to \infty}\lambda(\mu,K) =\infty.
\ee
Recall that $\mu_{max}$ stands for the largest positive eigenvalue of $\bW$. Then
for $K\in [0,K(\mu_{max}))$ there are no eigenvalues with positive real part.
Furthermore, for small $\varepsilon>0$ and $K\in (K(\mu_{max}), 
K(\mu_{max})+\varepsilon)$ there is a unique positive eigenvalue of $\bT$,
$\lambda(K,\mu_{max}),$ which vanishes as $K\to K(\mu_{max})+0$ (see~\cite{ChiMed16} 
for more details).

\section{The generalized spectral theory}\lbl{sec.spectral}
\setcounter{equation}{0}

The major obstacle in studying stability and bifurcations of the incoherent
state is the continuous spectrum of the linearized problem on the imaginary
axis (cf.~\eqref{cspectrum}). To deal with this difficulty, we develop the generalized spectral theory 
following the treatment of the classical KM in \cite{Chi15a}. Below, we outline
the key steps in the analysis of the generalized eigenvalue problem referring
the interested reader to \cite{Chi15a}, \cite{Chi15b} for missing proofs and further details.

\subsection{The rigged Hilbert space}

In this subsection, we define a rigged Hilbert space (a.k.a. Gelfand triple) \cite{GelVilv4}
\begin{eqnarray*}
X \subset H\subset X^\prime,
\end{eqnarray*}
where $H$ is a Hilbert space, and $X$ is a dense subspace of $H$,
whose topology is stronger than that of $H$.
Throughout this paper, we assume that $X$ is a locally convex
Hausdorff topological  vector space over $\C$ and $X'$ its dual space,
the space  of continuous antilinear functionals on $X$. 
Let $\langle \cdot, \cdot\rangle$ denote the pairing 
between $X^\prime$ and $X,$ i.e.,  for  $l \in X^\prime$ and $f \in X$, 
$\langle l, f \rangle:=l(f)$ stands for the corresponding antilinear functional.
To use the generalized spectral theory  (cf.~\cite{Chi15b}) we also need
$X$ to be a quasi-complete barreled space.

We take $L^2 (\R \times I, g(\omega )d\omega dx )$ as the Hilbert
space $H$ and contruct  $X$ as follows.
Let $\mathrm{Exp}(\beta, n)$ be the set of holomorphic functions on the region 
$\C_n := \{ z\in \C \, | \, \mathrm{Im} (z) \geq -1/n\}$ such that the norm
\begin{equation}
|| \phi ||_{\beta, n} := \sup_{\mathrm{Im} (z) \geq -1/n} e^{-\beta |z|}|\phi (z)|
\label{exp1}
\end{equation}
is finite. With this norm, $\mathrm{Exp}(\beta, n)$ is a Banach space.
The family of spaces $\{ \mathrm{Exp}(\beta, n)\}^\infty_{n=1}$ is an increasing sequence in $n$.
By Montel's theorem, the inclusion $\mathrm{Exp}(\beta, n) \to \mathrm{Exp}(\beta, n+1)$
is a compact operator for any $n\geq 1$.
By Komatsu's theorem \cite{Kom67},  the inductive limit
\begin{eqnarray*}
\mathrm{Exp}(\beta) : = \varinjlim_{n \geq 1} \mathrm{Exp} (\beta, n) 
\left( = \bigcup_{n \geq 1} \mathrm{Exp} (\beta, n)  \right).
\end{eqnarray*}
is a complete Montel space.
In particular, it is a complete barreled (DF) space.
Similarly, the inductive limit $\displaystyle \mathrm{Exp}:= \varinjlim_{\beta \geq 0}\mathrm{Exp}(\beta)$
is a complete barreled (DF) space.
The properties of  $\operatorname{Exp}$ are described in detail in \cite{Chi15a}.

Let $\mathrm{Exp}(\beta, n) \otimes L^2(I)$ be a projective tensor product.
Since the identity map $L^2(I) \to L^2(I)$ is weakly compact, the inclusions
$\mathrm{Exp}(\beta, n) \otimes L^2(I) \to \mathrm{Exp}(\beta, n+1) \otimes L^2(I)$
and $\mathrm{Exp}(\beta) \otimes L^2(I) \to \mathrm{Exp}(\beta +1) \otimes L^2(I)$ are weakly compact operators.
By Komatsu's theorem \cite{Kom67}, the inductive limit $X:= \mathrm{Exp} \otimes L^2(I)$
is a complete barreled (DF) space and $X'$ is a Fr\'{e}chet space.
For every $f \in X$ we have $f(\omega,\cdot)\in L^2(I)$ for each $\omega\in\R$. In addition, 
$f(\omega, x)$ is holomorphic in $\omega$ on the upper half plane, where it can grow at most exponentially.
Then the operator $\bT$ and the rigged Hilbert space $X\subset H \subset X'$ 
satisfy all assumptions of the generalized spectral theory in
\cite{Chi15b}.

Note that if $l \in H$ then
\begin{eqnarray*}
\langle l, f \rangle := (l, f^*)_{L^2(\R\times I)} = 
\int_{I}\!\int_{\R} \! l (\omega,x )f(\omega,x )g(\omega )d\omega dx, 
\end{eqnarray*}
for $f\in X$,
where $f^* (\omega ,x) := \overline{f (\overline{\omega }, x)}$. 
Thus, $l\in H$ can be viewed as an element of $X'$.

\subsection{The generalized eigenvalue problem}

In this subsection we calculate the resolvent
of $\bT$ and spectral projections.
With the rigged Hilbert space defined above, we will view the resolvent as an operator
from $X$ to $X^\prime$.

Below, we will need to construct analytic continuation for certain functions involving
integrals of Cauchy type. For this, we are going to use an implication of the Sokhotski-Plemelj
formulas, which we formulate as a separate statement for convenience.

\begin{lem}\lbl{lem.Sokh} (Sokhotski-Plemelj, cf.~\cite{Gakhov66})
Let $f$ be a complex valued function on $\R$. Suppose $f$ has at most a 
finite number 
of integrable discontinuities. Then
\be\lbl{Cauchy}
F(z)=\int_\R {f(\omega) d\omega \over z- \1\omega}
\ee
is an analytic function in the right and left open half-planes of $\C$. Furthermore,
for $z=x+\1 y$, the following formulas determine the limits of $F(z)$ as $x\to 0\pm$:
\be\lbl{Sokh}
\begin{split}
\lim_{x\to 0\pm}  F(z) &=  \pm\pi f(y) + \1 \operatorname{PV} 
\int_{\1\R} {f(-\1\phi)d\phi\over \phi - \1 y} \\
 &= \pm\pi f(y) -\1\pi H[f](y),
\end{split}
\ee
where $\operatorname{PV}$ stands for the principal value in the sense of Cauchy and
$H[f]$ denotes the Hilbert transform of $f$.
\end{lem}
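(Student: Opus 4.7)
The plan is to reduce both claims to the classical Sokhotski--Plemelj theorem on the real line by rotating the Cauchy contour, which in the present situation amounts to an elementary change of variable. Analyticity of $F$ off the imaginary axis and the jump formulae across it then follow from their standard real-line counterparts.

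First I would verify analyticity of $F$ on $\{\Rep z > 0\}$ and $\{\Rep z < 0\}$. For $z = x + \1 y$ with $|x|\geq x_0 > 0$, the kernel $(z-\1\omega)^{-1}$ is holomorphic in $z$ for every $\omega\in\R$ and bounded uniformly in $\omega$ by $1/x_0$. Combined with the integrability of $f$ (implicit in the hypothesis on the discontinuities and necessary for $F$ to be defined at all), differentiation under the integral sign --- or Morera combined with Fubini --- yields holomorphy of $F$ on each of the two open half-planes.

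For the boundary values I would rewrite the kernel as $(z-\1\omega)^{-1} = -\1\bigl((y-\omega)-\1 x\bigr)^{-1}$ and substitute $s = y-\omega$ to obtain
\begin{equation*}
F(x+\1 y) \;=\; -\1\int_\R \frac{f(y-s)\,ds}{s-\1 x}.
\end{equation*}
This is a standard Cauchy-type integral on $\R$ with parameter $\1 x$, so the classical Sokhotski--Plemelj formula applied to $\psi(s):=f(y-s)$ produces
\begin{equation*}
\lim_{x\to 0\pm} F(x+\1 y) \;=\; \pm\pi f(y) \;-\; \1\,\operatorname{PV}\int_\R \frac{f(y-s)}{s}\,ds.
\end{equation*}
Reversing the substitution ($u = y-s$) identifies the principal-value integral with $\pi H[f](y)$ in the convention $H[f](y) = \pi^{-1}\operatorname{PV}\int_\R f(u)/(y-u)\,du$, which gives the second equality in \eqref{Sokh}. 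The intermediate form $\1\,\operatorname{PV}\int_{\1\R} f(-\1\phi)/(\phi-\1 y)\,d\phi$ is simply the same expression with the imaginary axis parametrized by $\phi=\1 s$: then $d\phi=\1\,ds$ and $\phi-\1 y = \1(s-y)$, and the two representations become identical after inspection.

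The only nontrivial ingredient is the classical Sokhotski--Plemelj step under the stated hypothesis of finitely many integrable discontinuities; since this is exactly the setting treated in \cite{Gakhov66}, the argument there (decomposing $f$ into a smooth part, handled by contour deformation, plus localized singular pieces handled by direct estimates near each discontinuity) may be invoked verbatim. Thus no step should present a serious obstacle: the content of the lemma is essentially the rotation of an already classical boundary-value formula by $\pi/2$.
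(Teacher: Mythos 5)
The paper does not prove this lemma; it is cited directly from \cite{Gakhov66} as a known classical result, so there is no argument in the paper to compare against. Your derivation is correct and is the natural route: writing $(z-\1\omega)^{-1} = -\1((y-\omega)-\1 x)^{-1}$ and substituting $s=y-\omega$ turns $F(x+\1 y)$ into a Cauchy-type integral on the real line with complex parameter $\1 x$, and since $\1 x$ lies in the upper (resp.\ lower) half-plane for $x>0$ (resp.\ $x<0$), the classical real-line Sokhotski--Plemelj formula, multiplied by the overall prefactor $-\1$, gives $\pm\pi f(y)-\1\,\operatorname{PV}\int_{\R}f(y-s)/s\,ds$ exactly as you state. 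Reversing the substitution identifies the principal-value integral with $\pi H[f](y)$ under the convention $H[f](y)=\pi^{-1}\operatorname{PV}\int_{\R}f(u)/(y-u)\,du$, and the parametrization $\phi=\1 s$ of $\1\R$ reproduces the intermediate expression in \eqref{Sokh}; the sign bookkeeping is consistent throughout, and the analyticity argument off the imaginary axis is standard.
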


\begin{cor} \lbl{cor.Sokh} Suppose $f$ is holomorphic on the real axis and admits 
the analytic continuation to the upper half-plane. Then
\be\lbl{extend}
\tilde F(z) =\left\{ \begin{array}{ll}  F(z), & x>0,\\
\lim_{x\to 0+} F(z), & x=0,\\
F(z)+2\pi f(-\1 z), & x<0,
\end{array}
\right.
\ee
is an entire function.
\end{cor}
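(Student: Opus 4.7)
The plan is to decompose $\tilde F$ into two holomorphic pieces, one on each open half-plane, and to verify that they glue continuously across the imaginary axis $\1\R$. Morera's theorem will then upgrade continuity to entirety.

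First, by Lemma~\ref{lem.Sokh}, $F(z)$ is holomorphic on each open half-plane. In the closed left half-plane $\{\operatorname{Re} z \le 0\}$ the map $z \mapsto f(-\1 z)$ is holomorphic, because $\mathrm{Im}(-\1 z) = -\operatorname{Re} z \ge 0$, so $-\1 z$ lies in the closed upper half-plane where $f$ extends analytically by hypothesis. Consequently $F(z) + 2\pi f(-\1 z)$ is holomorphic on $\{\operatorname{Re} z < 0\}$ and extends continuously to $\1\R$.

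Next, I would compute boundary values along $z = \1 y$ using \eqref{Sokh}. The right-side limit is $\lim_{x \to 0+} F(x + \1 y) = \pi f(y) - \1 \pi H[f](y)$, while, noting that $-\1(x + \1 y) \to y$ as $x \to 0-$, the left-side limit of the modified branch is $\lim_{x \to 0-}\bigl[F(x + \1 y) + 2\pi f(-\1(x + \1 y))\bigr] = -\pi f(y) - \1 \pi H[f](y) + 2\pi f(y) = \pi f(y) - \1 \pi H[f](y)$. The two limits agree and coincide with the value $\tilde F(\1 y)$ prescribed by the middle branch of the piecewise definition. Hence $\tilde F$ is continuous on $\C$ and holomorphic on $\C \setminus \1\R$.

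Finally, I would invoke the standard removability principle: a function continuous on $\C$ and holomorphic off a smooth curve is entire. Concretely, for any triangle $\Delta \subset \C$ one splits it along $\1\R$ into sub-triangles whose interiors lie strictly in the open half-planes; on each sub-triangle Cauchy's theorem applies after a limiting argument that shifts the vertical edge slightly off $\1\R$, using continuity of $\tilde F$ to pass to the limit. Summing, $\oint_{\p \Delta} \tilde F\, dz = 0$ for every triangle, and Morera's theorem yields that $\tilde F$ is entire. The only genuinely nontrivial point is the cancellation in the jump of $F$ across $\1\R$, which is exactly the content of Lemma~\ref{lem.Sokh}; everything else is routine.
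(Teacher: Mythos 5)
Your proof is correct, and it fills in the argument that the paper leaves implicit: Corollary~\ref{cor.Sokh} is stated without proof as an immediate consequence of Lemma~\ref{lem.Sokh}, and the route you take — observe that $F$ is holomorphic off $\1\R$, that $f(-\1 z)$ is holomorphic on the closed left half-plane because $-\1 z$ then lies in the closed upper half-plane where $f$ continues analytically, compute the one-sided boundary values via \eqref{Sokh} and see that the $2\pi f(y)$ jump is exactly cancelled, and then upgrade continuous matching across $\1\R$ to holomorphy by Morera's theorem — is precisely the standard justification. The boundary-value algebra is verified correctly (right limit $\pi f(y) - \1\pi H[f](y)$ equals the left limit $-\pi f(y) - \1\pi H[f](y) + 2\pi f(y)$), and the removability-across-a-line step is the classical Painlev\'e/Morera continuation principle. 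No gap.
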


\subsection{The generalized resolvent}

Our next goal is to compute the resolvent of $\bT$
\be\lbl{def-Res}
\bR(\lambda)=(\lambda -\bT)^{-1}.
\ee
To this end, we first compute $\bR(\lambda)$ for $\Rep(\lambda)>0$ and extend
it analytically to the left half-plane as an operator from $X$ to $X'$. 

In the right half-plane $\Rep(\lambda)>0,$ $\bR(\lambda)$ can be rewritten as follows
\be\lbl{Re-bR}
\bR(\lambda)=A(\lambda)
\left( \bI-{K\over 2} \bP A(\lambda)\right)^{-1} 
= \left( \bI-{K\over 2} A(\lambda) \bP \right)^{-1} A(\lambda ),
\ee
where
\be\lbl{def-bA}
A(\lambda)=\left(\lambda -\1\omega\right)^{-1},
\ee
and $\bI$ stands for the identity operator.
Note that $A(\lambda)$ ceases to exist as the multiplication operator on 
$H$
as $\Rep (\lambda )\to 0$ 
(recall that the imaginary axis is the continuous spectrum of $\mathbf{M}_{\1\omega}$). 
However, it can be extended to the left half-plane as
as an operator $\mathcal{A}: X\to X^\prime$ defined as follows
\be\lbl{def-A}
\langle \mathcal{A}(\lambda) u, v\rangle =\left\{ \begin{array}{ll}
\left(A(\lambda) u, v^\ast\right)_{H}, & \Rep (\lambda )>0,\\
\lim_{\Rep(\lambda )\to 0+} 
\left(A(\lambda) u, v^\ast\right)_{H}, & \Rep (\lambda )=0,\\
\left(A(\lambda) u, v^\ast\right)_{H}+2\pi g(-i\lambda )
\int_{I} u(-\1\lambda,x)v(-\1\lambda,x) dx,  & \Rep (\lambda )<0.
\end{array}
\right.
\ee
 By Corollary~\ref{cor.Sokh}, $\langle \mathcal{A}(\lambda) u, v\rangle$ is an entire
function in $\lambda $ for all $u,v\in X$.  This suggests an appropriate 
generalization of $\bR(\lambda),$ $\mathcal{R}(\lambda): X\to X^\prime$ defined by
\be\lbl{def-R}
\mathcal{R}(\lambda) =\mathcal{A}(\lambda) 
\left( \bI -{K\over 2} \mathbf{P}^\times
\mathcal{A}(\lambda)\right)^{-1}
= \left( \bI -{K\over 2} \mathcal{A}(\lambda) \mathbf{P}^\times
\right)^{-1} \mathcal{A}(\lambda),
\ee
where $\bP^\times: X^\prime\to X^\prime$ is the dual operator of $\mathbf{P}$.

Since $\bT$ has the continuous spectrum on the imaginary axis, $\bR(\lambda)$ 
can not be continued to the left-half plane as an operator on $H$.
We define the generalized eigenvalues of $\bT$ as the singularities of the generalized
resolvent $\mathcal{R}(\lambda).$

\begin{df}\lbl{def.resolvent}
$\lambda\in\C$ is called a generalized eigenvalue of $\bT$ if there is a nonzero
$v\in X^\prime$ such that 
\be\lbl{generalized-EV}
\left(\bI -{K\over 2}\mathcal{A}(\lambda) \mathbf{P}^\times \right)v=0,
\ee
In this case, $v$ is called a generalized eigenfunction.
\end{df}
\begin{rem}\lbl{rem.gEV}
Since the range of the operator $\mathbf{P}^\times : X' \to X'$ is in $X$, 
$\mathcal{A}(\lambda) \mathbf{P}^\times v$ is well-defined for $v\in X'$.
\end{rem}
\begin{rem}\lbl{rem.gEV-2}
The generalized eigenvalues and the corresponding eigenfunctions of 
$\bT$ are, in fact, the eigenvalues and eigenfunctions of the dual of 
$\bT,$ $\bT^\times$ (cf.~\cite{Chi15b}).
\end{rem}

\begin{thm}(cf.~\cite{Chi15b})\lbl{thm_chiba} Let $\lambda\in\C$ be a generalized eigenvalue of 
$\bT$ and $v\in X^\prime$ is the corresponding eigenfunction. Then $\bT^\times v=\lambda v$.
\end{thm}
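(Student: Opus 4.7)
The plan is to reduce $\bT^\times v = \lambda v$ to a single resolvent-type identity for $\mathcal{A}(\lambda)$, which is classical on $\{\Rep(\lambda) > 0\}$ and then extends by analytic continuation to all of $\C$. Decomposing $\bT^\times = \mathbf{M}_{\1\omega}^\times + (K/2)\bP^\times$ on $X'$, the desired conclusion becomes $(\lambda - \mathbf{M}_{\1\omega}^\times) v = (K/2)\bP^\times v$. The generalized eigenvalue condition of Definition~\ref{def.resolvent} rewrites as $v = (K/2)\mathcal{A}(\lambda)\bP^\times v$, and by Remark~\ref{rem.gEV} the element $\bP^\times v$ lies in $X$, so $\mathcal{A}(\lambda)$ applies to it in the literal sense. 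Substituting, the whole theorem is reduced to the key identity
\[
(\lambda - \mathbf{M}_{\1\omega}^\times)\, \mathcal{A}(\lambda)\psi = \psi \quad \text{in } X', \qquad \forall\, \psi \in X.
\]

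To prove the identity above, I would test against an arbitrary $\phi \in X$. The dual $\mathbf{M}_{\1\omega}^\times$ satisfies $\langle \mathbf{M}_{\1\omega}^\times l, \phi\rangle = \langle l, \1\omega \phi\rangle$ for $l \in X'$, and multiplication by $\1\omega$ preserves $X = \mathrm{Exp}\otimes L^2(I)$ (the norm~\eqref{exp1} tolerates the extra factor up to a harmless shift of the exponential type $\beta$). Hence the identity is equivalent to the scalar equation
\[
\langle \mathcal{A}(\lambda)\psi,\, (\lambda - \1\omega)\phi\rangle = \langle \psi, \phi\rangle.
\]
For $\Rep(\lambda) > 0$ this is immediate, since $\mathcal{A}(\lambda)\psi$ coincides with the ordinary $H$-element $(\lambda - \1\omega)^{-1}\psi$ and the two factors cancel against each other inside the integral. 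Both sides are entire functions of $\lambda$: the right-hand side is constant, while the left-hand side is entire by construction of $\mathcal{A}(\lambda)$, whose definition~\eqref{def-A} together with Corollary~\ref{cor.Sokh} is precisely the entire extension produced by the Sokhotski-Plemelj formula. Two entire functions that agree on an open set agree everywhere, so the identity holds for every $\lambda \in \C$. Specializing $\psi = \bP^\times v$ then yields $(\lambda - \mathbf{M}_{\1\omega}^\times)v = (K/2)\bP^\times v$, i.e., $\bT^\times v = \lambda v$.

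The main obstacle is controlling the $\lambda$-dependence of the test vector $(\lambda - \1\omega)\phi \in X$ and verifying that everything in the analytic-continuation step is genuinely entire. Two items in particular need care: (i) multiplication by $\1\omega$ must send $X$ into $X$ continuously, so that $\mathbf{M}_{\1\omega}^\times$ is well-defined on $X'$ in the first place; and (ii) the Sokhotski-Plemelj residue $2\pi g(-\1\lambda)\int_I u(-\1\lambda, x)\, v(-\1\lambda, x)\, dx$ in the third branch of~\eqref{def-A} must remain compatible with the substitution $\phi \mapsto (\lambda - \1\omega)\phi$, so that the continuation through the imaginary axis is not spoiled on $\{\Rep(\lambda) < 0\}$. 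Both verifications are routine consequences of the function-theoretic properties of $\mathrm{Exp}$ but constitute the real technical content, and they are exactly the sort of checks the generalized spectral framework of~\cite{Chi15b} is built to handle.
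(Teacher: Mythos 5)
Your proposal is correct, and it reproduces the general mechanism from~\cite{Chi15b}; the paper itself offers no proof and simply cites that reference, so your argument supplies the actual mathematical content. The reduction $\bT^\times = \mathbf{M}_{\1\omega}^\times + \tfrac{K}{2}\bP^\times$, combined with Definition~\ref{def.resolvent} in the form $v = \tfrac{K}{2}\mathcal{A}(\lambda)\bP^\times v$ and Remark~\ref{rem.gEV} (so that $\bP^\times v\in X$ and $\mathcal{A}(\lambda)$ applies to it literally), correctly isolates the single identity $(\lambda-\mathbf{M}_{\1\omega}^\times)\mathcal{A}(\lambda)\psi=\psi$ on $X$, and the algebra $\bT^\times v = \tfrac{K}{2}\bigl(\mathbf{M}_{\1\omega}^\times\mathcal{A}(\lambda)\psi + \psi\bigr) = \lambda\cdot\tfrac{K}{2}\mathcal{A}(\lambda)\psi = \lambda v$ then closes.

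One remark that strengthens (and slightly simplifies) your sketch: the analytic-continuation / identity-theorem step is not strictly needed, because the obstacle you flag in item (ii) resolves itself exactly. Pairing $\mathcal{A}(\lambda)\psi$ against $(\lambda-\1\omega)\phi$ and using the third branch of~\eqref{def-A}, the residue contribution is $2\pi g(-\1\lambda)\int_I \psi(-\1\lambda,x)\,\bigl[(\lambda-\1\omega)\phi\bigr]\big|_{\omega=-\1\lambda}\,dx$, and the factor $(\lambda-\1\omega)$ vanishes identically at $\omega=-\1\lambda$. Thus the residue term drops out and the remaining $H$-integral telescopes to $\int \psi\phi\,g$ in all three regions $\Rep(\lambda)>0$, $=0$, $<0$ without appealing to the identity theorem; the entire-function argument is still valid, but the direct check is cleaner. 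Your item (i) is likewise fine: multiplying by $\1\omega$ only shifts the exponential type $\beta$ in~\eqref{exp1}, so $\mathbf{M}_{\1\omega}$ is continuous on $X=\mathrm{Exp}\otimes L^2(I)$ and its transpose on $X'$ is well-defined. Note also that the pairing defined by the paper, $\langle l, f\rangle = \int l f\,g\,d\omega\,dx$, is genuinely bilinear (no conjugation on the real line after the $f\mapsto f^*$ twist), so pulling the scalar $\lambda$ out of the second slot, as your computation requires, is legitimate.
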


\begin{rem}
Using \eqref{def-A} and \eqref{generalized-EV}, one can see that the generalized eigenvalues $\lambda =\lambda (\mu, K)$
of $\bT$ are the roots of the following equation
\be\lbl{one-can-see}
{2\over K\mu}=\mathcal{D}(\lambda),
\ee
where $\mu$ is a nonzero eigenvalue of $\bW$ and
\be\lbl{def-D2}
\mathcal{D}(\lambda)= \left\{ \begin{array}{ll}
D(\lambda ), & \Rep(\lambda )>0,  \\
\displaystyle\lim_{\Rep (\lambda ) \to 0+}D(\lambda ), & \Rep(\lambda )=0,  \\
D(\lambda )+2\pi g(-i\lambda ), & \Rep(\lambda )<0.
\end{array} \right.
\ee
The right hand side of \eqref{def-D2} is an entire function (cf. Corollary~\ref{cor.Sokh}).
For $\Rep(\lambda ) > 0$, \eqref{one-can-see} is reduced to 
the equation for the eigenvalues of $\bT$ (cf.~\eqref{character}).
In this case, the corresponding generalized eigenfunction  
$v$ is included in $L^2(\R \times I, gd\omega dx),$ i.e., $\lambda$ is an eigenvalue of $\bT$.
On the other hand, for $\Rep (\lambda ) \leq 0$, 
the generalized eigenfunction $v$ is not in $H$
but is an element of the dual space $X^\prime$.
\end{rem}

Since the generalized eigenvalue of $\bT,$ $\lambda,$ is a  root of
\eqref{one-can-see}, $\mathcal{R}(\lambda )u$ is an $X'$-valued meromorphic function for
each $u\in X$. For $\Rep~\lambda >0$, it coincides with the
restriction of $\bR(\lambda)$ to $X$. Thus, $\mathcal{R}(\lambda )$ is
a meromorphic continuation of $\bR(\lambda)$ from the right half-plane to the left-half plane as an $X'$-valued
operator.

\subsection{The generalized Riesz projection}

Let $\mu$ be a positive eigenvalue of $\bW$ and $w \in L^2(I)$ be 
the corresponding eigenfunction. The largest positive eigenvalue 
of $\bW$ and the corresponding eigenfunction are denoted by $\mu_{max}$ and
$w_{max}$ respectively. 
For every 
$K>K_c^+=2/(\pi g(0)\mu_{max})$ there is a real positive eigenvalue of $\bT,$ 
$\lambda=\lambda(\mu, K)$. The corresponding eigenfunction is given by
\be\lbl{eigenvectorW}
v(\omega, x)={K\over 2}{w(x)\over \lambda -\1\omega}.
\ee

As $K$ approaches the critical value $K_c^+$ from above, 
the eigenvalue $\lambda(\mu_{max}, K)$ converges to $0+$
along the real axis and at $K=K_c^+$ it hits the continuous spectrum on the
imaginary axis. The corresponding eigenfunction approaches the critical
vector
\be\lbl{lim-eigen}
X^\prime \ni v_c^+ := {K^+_c\over 2}\wlim {w_{max}\over \lambda -\1\omega},
\ee
where $\operatorname{lim*}$ stands for the limit in $X^\prime$ with respect
to the weak dual topology \footnote{ 
$\{ l_n \} \subset X^\prime$ converges to $l \in X'$
if $\langle l_n, f \rangle \in \C$ tends to $\langle l, f \rangle$ for every $f \in X$.}
, i.e., 
the action of $v_c^+\in X^\prime$ on $u\in X$ is given by
\be\lbl{critical-action}
\begin{split}
\langle v_c^+, u\rangle &= {K_c^+\over 2} \l\int_{\R\times I} 
{w_{max}(x) g(\omega)\over \lambda -\1\omega} u(\omega,x) d\omega dx\\
&= {K_c^+\over 2} \l \int_\R {\left(w_{max}, u^*(\omega,\cdot)\right)_{L^2(I)} g(\omega)d\omega   
\over \lambda -\1\omega} .
\end{split}
\ee
Let $\lambda\in \C$ be a generalized eigenvalue of $\bT$. Then the generalized
Riesz projection $\bPi_\lambda : X\to X'$ is defined by
\be\lbl{Riesz}
\bPi_\lambda ={1\over 2\pi \1} \int_{\gamma(\lambda)} \mathcal{R}(z) dz,
\ee
where $\gamma(\lambda)$ is a simple closed curve around $\lambda$ 
oriented counterclockwise that does not 
encircle or intersect the rest of the spectrum. Below, we shall refer to such curves
as contours.
The image of $\bPi_\lambda $ gives the generalized eigenspace of $\lambda $~\cite{Chi15a}.

\begin{thm}\lbl{thm.Riesz}
Suppose  the algebraic and geometric multiplicities of $\mu_{max}$ coincide.
Then the generalized Riesz projection of $\lambda=0,$ the generalized eigenvalue of 
$\bT$ for $K=K_c^+$, has the following form
\be\lbl{critical-Riesz}
\begin{split}
\bPi_0 & = -\wlim \left(  D^\prime (\lambda)^{-1}
A(\lambda)
\tilde\bPi_{\mu_{max}} \bD(\lambda)
\right)\\
&= g_1 \wlim \left(
A(\lambda)
\tilde\bPi_{\mu_{max}} \bD (\lambda) \right),
\end{split}
\ee
where $g_1= -\lim_{\lambda\to 0+} D^\prime (\lambda)^{-1}$ is a positive constant,
$A(\lambda)$ was defined in \eqref{def-bA}, and $\tilde\bPi_{\mu}$ stands for 
the Riesz projection onto the eigenspace of $\bW$ corresponding to the eigenvalue $\mu$. 
The operator $\bD(\lambda)$ on $H$ is defined by
\be\lbl{def-bD}
\bD(\lambda) v= 
\int_\R {v(\omega,\cdot) g(\omega) d\omega\over \lambda -\1\omega},\quad
v\in L^2(\R\times I, gd\omega dx).
\ee
\end{thm}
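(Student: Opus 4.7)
The plan is to evaluate the contour integral $\bPi_0=(2\pi\1)^{-1}\oint_{\gamma(0)}\mathcal{R}(\lambda)\,d\lambda$ by first obtaining an explicit representation of $\mathcal{R}(\lambda)$ in which the dependence on $\bW$ is reduced to a single operator on $L^2(I)$, and then extracting the residue at $\lambda=0$.

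First I would exploit the factorization $\bP=\iota\circ\bW\circ J$, where $J\colon H\to L^2(I)$ integrates over $\omega$ against $g(\omega)\,d\omega$ and $\iota\colon L^2(I)\to H$ embeds $x$-functions as functions constant in $\omega$. Since $J\,A(\lambda)=\bD(\lambda)$ on $H$ and $\bD(\lambda)\,\iota=D(\lambda)\,\mathrm{id}_{L^2(I)}$ for $\Rep(\lambda)>0$, solving $v-(K_c^+/2)\bP A(\lambda)v=u$ in $H$ gives
\begin{equation*}
\bR(\lambda)u=A(\lambda)u+\tfrac{K_c^+}{2}\,A(\lambda)\,\iota\,\bW\bigl(\bI-\tfrac{K_c^+ D(\lambda)}{2}\bW\bigr)^{-1}\bD(\lambda)u.
\end{equation*}
Replacing $A$ by $\mathcal{A}$ and $D$ by $\mathcal{D}$ according to Corollary~\ref{cor.Sokh} extends this to the $X'$-valued meromorphic function $\mathcal{R}(\lambda)$, and the first summand is entire and contributes nothing to the contour integral.

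Next I would insert the spectral decomposition of the compact self-adjoint operator $\bW=\sum_k\mu_k\tilde\bPi_{\mu_k}$; the hypothesis that the algebraic and geometric multiplicities of $\mu_{max}$ coincide guarantees that $\tilde\bPi_{\mu_{max}}$ is a true Riesz projection without nilpotent part, so
\begin{equation*}
\bW\bigl(\bI-\tfrac{K_c^+\mathcal{D}(\lambda)}{2}\bW\bigr)^{-1}=\sum_{k}\frac{\mu_k\,\tilde\bPi_{\mu_k}}{1-K_c^+\mathcal{D}(\lambda)\mu_k/2}.
\end{equation*}
Only the $\mu_{max}$ summand is singular at $\lambda=0$: the identity $K_c^+=2/(\pi g(0)\mu_{max})$ combined with $\mathcal{D}(0)=\pi g(0)$ makes its denominator vanish, while all other $\mu_k$ are isolated from $\mu_{max}$ (since $\bW$ is compact) so their denominators stay bounded away from zero on a sufficiently small contour. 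The Taylor expansion $1-K_c^+\mathcal{D}(\lambda)\mu_{max}/2=-\mathcal{D}'(0)\lambda/(\pi g(0))+O(\lambda^2)$ shows the pole is simple with residue $-\pi g(0)/\mathcal{D}'(0)$ for the scalar prefactor.

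Collecting constants (noting $K_c^+\mu_{max}/2=1/(\pi g(0))$), the residue calculation gives
\begin{equation*}
\bPi_0 u=-\tfrac{1}{\mathcal{D}'(0)}\,\wlim\mathcal{A}(\lambda)\,\iota\,\tilde\bPi_{\mu_{max}}\bD(\lambda)u=g_1\,\wlim A(\lambda)\,\tilde\bPi_{\mu_{max}}\bD(\lambda)u,
\end{equation*}
which is the claimed identity with $g_1=-1/\mathcal{D}'(0)=-\lim_{\lambda\to 0+}D'(\lambda)^{-1}$. Positivity of $g_1$ follows from the identity $D'(0+)=\int_\R\omega^{-2}(g(\omega)-g(0))\,d\omega<0$ (obtained by the identity $\int(\omega^2-x^2)/(x^2+\omega^2)^2\,d\omega=0$ to subtract off $g(0)$ before taking $x\to0+$), which is strictly negative by the evenness and unimodality of $g$. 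The main technical obstacle is to justify the residue calculation as an identity of $X'$-valued operators: one must verify that $\mathcal{A}(\lambda)\iota\,\tilde\bPi_{\mu_{max}}\bD(\lambda)u$ is a well-defined element of $X'$ for each $u\in X$, that the spectral series converges in the required operator topology, and that the contour integration commutes with the weak-$X'$ limit $\wlim$. These verifications rely on the rigged Hilbert space framework of Section~\ref{sec.spectral} and the generalized spectral theory of \cite{Chi15b}.
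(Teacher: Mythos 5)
Your proposal is correct in substance, but it reaches the result by a genuinely different route from the paper. The paper first establishes the resolvent formula of your first display as Lemma~\ref{lem.Resolvent}, then proves a separate lemma (Lemma~\ref{lem. Riesz2}) for the Riesz projection of a \emph{genuine} positive eigenvalue $\lambda(\mu,K)>0$ with $K>K_c^+$, and finally obtains $\bPi_0$ by sending $K\to K_c^++0$ (equivalently $\lambda\to 0+$). The key device in Lemma~\ref{lem. Riesz2} is the change of variables $\zeta=2(KD(z))^{-1}$ in the contour integral $\oint_{\gamma(\lambda)}\bR(z)\,dz$, which sends the small contour around $\lambda$ to a contour $\tilde\gamma(\mu)$ around the eigenvalue $\mu$ of $\bW$; the resulting integral is then recognized as the usual Riesz projection $\tilde\bPi_\mu$ for $\bW$, times scalar factors evaluated at $\zeta=\mu$. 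Your argument instead works directly at $\lambda=0$, inserts the spectral decomposition $\bW=\sum_k\mu_k\tilde\bPi_{\mu_k}$, and locates the pole of the single singular summand by Taylor expansion of the scalar denominator $1-K_c^+\mathcal D(\lambda)\mu_{\max}/2$. The two approaches trade off as follows: your route makes the origin of the pole and the role of the multiplicity hypothesis completely transparent, and it cleanly isolates the $\mu_{\max}$ contribution from the rest of the spectrum; the paper's change-of-variables trick avoids ever invoking convergence of a spectral series of operators and reduces everything to the familiar one-variable residue calculus for the resolvent $(\zeta-\bW)^{-1}$, which is arguably the tidier justification in the rigged-Hilbert-space setting. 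Your alternative derivation of $D'(0+)=\int_\R\omega^{-2}(g(\omega)-g(0))\,d\omega<0$ via the identity $\int_\R(\omega^2-x^2)/(x^2+\omega^2)^2\,d\omega=0$ is a nice elementary replacement for the paper's Sokhotski--Plemelj computation $D'(0+)=-\pi H[g'](0)$; both are valid. The technical caveats you flag at the end (justifying the residue calculation as an identity of $X'$-valued operators, convergence of the spectral series in the appropriate topology, interchange of the contour integral with $\wlim$) are precisely the points where the paper's contour-deformation argument is more economical, since it only ever manipulates honest $H$-valued quantities until the very last limit $\lambda\to 0+$.
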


The proof of Theorem~\ref{thm.Riesz} relies on three technical lemmas. Below we 
state and prove these lemmas first and then prove the theorem. 

\begin{lem} \lbl{lem.Resolvent}
Let $\Rep (\lambda )>0$ then
\be\lbl{rewrite-Res}
\mathbf{R}(\lambda) v=  A(\lambda) v +{K\over 2} 
A(\lambda) \bW \left(\bI -{K\over 2} D(\lambda) \bW \right)^{-1}
\bD(\lambda) v, \quad v \in H.
\ee
\end{lem}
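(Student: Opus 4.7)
The plan is to derive the stated formula directly from the resolvent identity
$u := \mathbf{R}(\lambda) v$ solves $(\lambda - \mathbf{T}) u = v$, and then reduce the problem to a closed equation on $L^2(I)$ by exploiting that $\mathbf{P} u$ depends only on $x$.

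First, I would write $(\lambda - \1\omega) u - \frac{K}{2} \mathbf{P} u = v$ and, using that $\Rep(\lambda) > 0$ (so $A(\lambda)$ acts boundedly on $H$), rearrange to
\begin{equation*}
u = A(\lambda) v + \frac{K}{2} A(\lambda) \mathbf{P} u.
\end{equation*}
The right-hand side still contains $u$ through $\mathbf{P} u$, but the key structural fact is that, by the definition \eqref{def-P}, $\mathbf{P} u$ is a function of $x$ alone, and in fact $\mathbf{P} u = \mathbf{W}\psi$ where
\begin{equation*}
\psi(x) := \int_\R u(\omega, x)\, g(\omega)\, d\omega \in L^2(I).
\end{equation*}

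Next, I would apply $\int_\R \cdot\, g(\omega)\, d\omega$ to both sides of the identity for $u$. The first term yields $\bD(\lambda) v$ by definition \eqref{def-bD}. For the second, since $\mathbf{P} u = \mathbf{W}\psi$ is $\omega$-independent, it pulls out of the integral and leaves the scalar factor $\int_\R g(\omega)/(\lambda - \1\omega)\, d\omega = D(\lambda)$. This produces the closed scalar-operator equation on $L^2(I)$
\begin{equation*}
\left(\mathbf{I} - \frac{K}{2} D(\lambda) \mathbf{W}\right)\psi = \bD(\lambda) v.
\end{equation*}
Inverting this (which is permissible since $\lambda$ is in the resolvent set of $\mathbf{T}$, equivalently $2/(K\mu) \neq D(\lambda)$ for every eigenvalue $\mu$ of $\mathbf{W}$, cf.~\eqref{character}) gives $\psi = (\mathbf{I} - \tfrac{K}{2} D(\lambda)\mathbf{W})^{-1} \bD(\lambda) v$. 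Substituting $\mathbf{P} u = \mathbf{W}\psi$ back into the expression for $u$ yields \eqref{rewrite-Res}.

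I do not expect a serious obstacle here: the argument is essentially algebraic once one notices the $x$-only character of $\mathbf{P} u$. The only point requiring care is the invertibility of $\mathbf{I} - \tfrac{K}{2} D(\lambda)\mathbf{W}$ on $L^2(I)$, which must be justified by the hypothesis $\Rep(\lambda) > 0$ together with the correspondence between eigenvalues of $\mathbf{T}$ and those of $\mathbf{W}$ recorded in \eqref{character}; equivalently, one observes that the derivation shows that if $\mathbf{R}(\lambda)$ exists then the inverse in question must exist, since otherwise $v \mapsto u$ would fail to be well-defined.
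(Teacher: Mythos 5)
Your argument is essentially identical to the paper's proof: the paper also starts from $(\lambda-\bT)\bR(\lambda)v = v$, rearranges to $\bR(\lambda)v = A(\lambda)v + \tfrac{K}{2}A(\lambda)\bP\bR(\lambda)v$, applies $\int_\R \cdot\, g(\omega)\,d\omega$ to obtain a closed equation on $L^2(I)$ for what you call $\psi$ (the paper writes it as $\mathbf{Q}v$), inverts, and substitutes back. Your remark on why $\bI - \tfrac{K}{2}D(\lambda)\bW$ is invertible is a sensible clarification that the paper leaves implicit, but it does not change the route.
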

\begin{proof} By definition of $\bR(\lambda)$ \eqref{def-Res}, for any 
$v\in H$, we have
$$
\left(\lambda -\1\omega -{K\over 2}\bP\right) \bR(\lambda) v= v,
$$
and, thus,
\be\lbl{R-1}
\bR(\lambda) v= A(\lambda) v +{K\over 2}  
A(\lambda) \bP\bR(\lambda) v.
\ee

Using Fubini theorem, from \eqref{def-P} we have
\be\lbl{using-Fubini}
\begin{split}
\bP\bR(\lambda) v & = \bW\left[ \int_\R (\bR(\lambda)v)(\omega,\cdot) g(\omega) d\omega\right]\\
&=: \bW \mathbf{Q} v.
\end{split}
\ee
On the other hand, integrating both sides of \eqref{R-1} against $g(\omega)d\omega$, we obtain
\be\lbl{R-2}
\begin{split}
\mathbf{Q} v &=
\int_\R (\bR(\lambda) v)(\omega, \cdot) g(\omega) d\omega\\
&= \bD(\lambda) v +{K\over 2}  D(\lambda) \int_{\R\times I}
W(\cdot, y) \left( \bR(\lambda) v\right) (\omega, y) g(\omega)d\omega dy\\
&= \bD(\lambda) v +{K\over 2}  D(\lambda) \bW \mathbf{Q} v.
\end{split}
\ee
and
\be\lbl{R-3}
\mathbf{Q} =\left(\bI - {K\over 2} D(\lambda) \bW\right)^{-1} \bD(\lambda).
\ee
Plugging \eqref{R-3} into  \eqref{using-Fubini}, we have
\be\lbl{R-3-and-Fubini}
\bP\bR(\lambda) v=\bW \left(\bI - {K\over 2} D(\lambda) \bW\right)^{-1} \bD(\lambda) v.
\ee
The combination of \eqref{R-1} and \eqref{R-3-and-Fubini} yields \eqref{rewrite-Res}.
\end{proof}

\begin{lem}\lbl{lem. Riesz2}
Let $\lambda=\lambda(\mu, K)>0$ be an eigenvalue of $\bT$ corresponding to the 
positive eigenvalue of $\bW$, $\mu$, and $K>K_c^+$, and suppose that  
the geometric and algebraic multiplicities of $\mu$ coincide.

Then 
\be\lbl{Riesz2+}
{\mathbf \Pi}_\lambda = - D^\prime (\lambda)^{-1} A(\lambda)
\tilde{\mathbf \Pi}_\mu \bD(\lambda),
\ee
provided $D'(\lambda ) \neq 0$,
where $\tilde{\mathbf \Pi}_\mu$ is the Riesz projection onto the eigenspace of $\bW$ 
corresponding to $\mu$.
\end{lem}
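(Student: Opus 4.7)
The plan is to start from the contour-integral definition of the Riesz projection together with the explicit formula for $\bR(z)$ provided by Lemma \ref{lem.Resolvent}, then isolate the contribution of the eigenvalue $\mu$ of $\bW$ via a spectral decomposition, and finally extract the residue at $z=\lambda$.

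First, I substitute
\[ \bR(z) v = A(z) v + \frac{K}{2} A(z) \bW \left(\bI - \frac{K}{2} D(z)\bW\right)^{-1} \bD(z) v \]
into $\bPi_\lambda v = (2\pi \1)^{-1} \int_{\gamma(\lambda)} \bR(z) v \, dz$. Since the singularities of $A(z)$, regarded as a multiplication operator on $H$, lie on the imaginary axis while $\lambda > 0$, the contour $\gamma(\lambda)$ can be taken inside the open right half-plane. Hence the first summand $A(z) v$ is holomorphic on and inside $\gamma(\lambda)$ and contributes zero to the contour integral.

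Next, I use the spectral decomposition $\bW = \sum_k \mu_k \tilde\bPi_{\mu_k}$, valid because $\bW$ is compact and self-adjoint; the hypothesis that the geometric and algebraic multiplicities of $\mu$ coincide ensures that $\tilde\bPi_\mu$ is a genuine spectral projection with no Jordan block contribution. This yields
\[ \left(\bI - \frac{K}{2} D(z)\bW\right)^{-1} = \sum_k \frac{1}{1 - (K/2)\, D(z)\, \mu_k}\, \tilde\bPi_{\mu_k} + \tilde\bPi_0, \]
where $\tilde\bPi_0$ projects onto $\ker \bW$. By \eqref{character}, only the summand with $\mu_k = \mu$ becomes singular at $z = \lambda$. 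Because $\bW$ acts only on the $x$-variable, it commutes with the $\omega$-multiplication operator $A(z)$, and $\bW \tilde\bPi_\mu \bD(z) v = \mu\, \tilde\bPi_\mu \bD(z) v$, so the remaining integral reduces to
\[ \bPi_\lambda v = \frac{K\mu}{2} \cdot \frac{1}{2\pi \1} \int_{\gamma(\lambda)} \frac{A(z)\, \tilde\bPi_\mu\, \bD(z)\, v}{1 - (K\mu/2)\, D(z)}\, dz. \]

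Finally, I Taylor-expand $1 - (K\mu/2)\, D(z) = -(K\mu/2)\, D'(\lambda)(z-\lambda) + O((z-\lambda)^2)$ at the root $\lambda$, so that $(K\mu/2)\,[1 - (K\mu/2)\, D(z)]^{-1}$ has a simple pole at $z = \lambda$ with residue $-D'(\lambda)^{-1}$. Since $z \mapsto A(z)\, \tilde\bPi_\mu\, \bD(z)\, v$ is holomorphic in a neighborhood of $\lambda$, the residue theorem delivers exactly \eqref{Riesz2+}. The main technical point I expect to verify carefully is the interchange of the operator composition with the infinite spectral sum and the justification that the pieces indexed by $\mu_k \neq \mu$ give holomorphic contributions inside $\gamma(\lambda)$; the hypothesis $D'(\lambda) \neq 0$ is precisely what guarantees the pole at $\lambda$ is simple, so that the closed form in terms of $\tilde\bPi_\mu$, $A(\lambda)$, and $\bD(\lambda)$ alone is possible. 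If $D'(\lambda)$ vanished, higher-order Laurent coefficients would enter and the formula would have to be modified accordingly.
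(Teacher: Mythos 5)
Your proof is correct and reaches the paper's formula by a closely related but technically distinct route. Both you and the paper start from Lemma~\ref{lem.Resolvent}, discard the holomorphic term $A(z)v$, and compute a residue at $z=\lambda$; the difference is in how the contribution of the single eigenvalue $\mu$ of $\bW$ is isolated. The paper performs the change of variable $\zeta = 2/(K D(z))$, which carries the contour $\gamma(\lambda)$ around $\lambda$ into a contour $\tilde\gamma(\mu)$ around $\mu$, and then directly reads off the Riesz projection $\tilde\bPi_\mu = (2\pi\1)^{-1}\oint(\zeta-\bW)^{-1}d\zeta$, with the factor $(\zeta D'(z(\zeta)))^{-1}$ in the Jacobian producing $-D'(\lambda)^{-1}$ after the cancellation $\bW\tilde\bPi_\mu = \mu\tilde\bPi_\mu$. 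You instead expand $(\bI-\tfrac{K}{2}D(z)\bW)^{-1}$ via the spectral decomposition of the compact self-adjoint $\bW$, kill all summands with $\mu_k\neq\mu$ as holomorphic inside $\gamma(\lambda)$, and extract the residue of the scalar factor $(K\mu/2)/(1-(K\mu/2)D(z))$, where the hypothesis $D'(\lambda)\neq 0$ visibly controls the pole order. The paper's change of variables is slicker, whereas yours is more elementary and transparent about the role of $D'(\lambda)$. Two small remarks: since $\bW$ has a symmetric kernel and hence is self-adjoint, the equality of algebraic and geometric multiplicities is automatic, so framing the hypothesis as excluding ``Jordan blocks'' is a bit beside the point (both proofs use it only to assert a simple pole of the $\bW$-resolvent, which holds unconditionally here); and the remark that $A(z)$ and $\bW$ commute is unnecessary, since $\bW$ already sits to the left of $\tilde\bPi_\mu$ in the expression you manipulate.
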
 

\def\bI {\operatorname{\mathbf I}}

\begin{proof} As before, let $\gamma(\lambda)$ denote a contour around 
$\lambda$. From \eqref{rewrite-Res}, we have
\be\lbl{int-Res}
\int_{\gamma(\lambda)} \bR(z)dz = {K\over 2} 
\int_{\gamma(\lambda)} A(z) \bW\left( \bI -{K\over 2} 
D(z)\bW\right)^{-1} \bD(z)dz.
\ee
We change variable in the integral on the right--hand side to $\zeta=2(K D(z))^{-1}$.
By deforming the contour $\gamma(\lambda)$ if necessary, we can always achieve
$D^\prime(z)\neq 0$ for $z\in \gamma(\lambda),$ so that this change of variable 
$\zeta=\zeta(z)$ is well defined. Under this transformation, $\gamma(\lambda)$ 
is mapped to $\tilde\gamma(\mu)$, a contour around $\mu$. Thus, we have
\be\lbl{int-Res}
\int_{\gamma(\lambda)} \bR(z)dz = -\int_{\tilde\gamma(\mu)}
A(z(\zeta)) \bW \left(\zeta -\bW\right)^{-1} \bD(z(\zeta))
{d\zeta\over \zeta D^\prime (z(\zeta))}.
\ee
Since the algebraic and geometric multiplicities of $\mu$ are
equal, the singularity of $\left(\zeta -\bW\right)^{-1}$ at $\zeta=\mu$ is a simple 
pole, and the other factor in the integrand of the above is regular at $\zeta = \mu$.
Therefore, the right--hand side of \eqref{int-Res} simplifies to
\be\lbl{int-Res-1}
\int_{\gamma(\lambda)} \bR(z)dz =   -\frac{1}{\mu D^\prime (\lambda)} A(\lambda)
\bW  \left(\int_{\tilde\gamma(\mu)} 
\left(\zeta -\bW\right)^{-1} \bD (z(\zeta))d\zeta\right).
\ee
By multiplying both sides of \eqref{int-Res-1} by $(2\pi i)^{-1}$, we have
$$
\operatorname{\mathbf \Pi}_\lambda =   -\frac{1}{\mu D^\prime (\lambda)} A(\lambda)
\bW \tilde\bPi_\mu \bD(\lambda).
$$
Finally, since $\tilde{\operatorname{\mathbf \Pi}}_\mu$ is the projection 
on the eigensubspace of $\bW,$
$$
\bW \tilde{\operatorname{\mathbf \Pi}}_\mu \bD(\lambda)=
\mu \tilde{\operatorname{\mathbf \Pi}}_\mu \bD(\lambda).
$$
Thus,
\be\lbl{Bpi-lambda}
\operatorname{\mathbf \Pi}_\lambda =  
- (D^\prime (\lambda))^{-1} A(\lambda)
 \tilde{\operatorname{\mathbf \Pi}}_\mu \bD(\lambda).
\ee
\end{proof}

\begin{lem}\lbl{lem.byparts}
\be\lbl{byparts}
\lim_{z\to 0+} \int_\R {g(\omega) d\omega \over (z-\1\omega)^{n+1}} =
{\1^n\pi \over n!} \left( g^{(n)}(0) -\1 H[g^{(n)}](0)\right).
\ee
\end{lem}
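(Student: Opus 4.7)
The plan is to reduce the integral with an $(n{+}1)$-st power in the denominator to one with a single power, and then apply the Sokhotski--Plemelj formula from Lemma~\ref{lem.Sokh}.

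First I would integrate by parts. Writing
\begin{eqnarray*}
\frac{1}{(z-\1\omega)^{n+1}} = \frac{1}{\1 n}\frac{d}{d\omega}\frac{1}{(z-\1\omega)^n},
\end{eqnarray*}
I would set $u=g(\omega)$ and $dv = d\omega/(z-\1\omega)^{n+1}$ and integrate. Since $g$ is real analytic with finite moments of all orders, $g$ and each of its derivatives decay at infinity, so the boundary terms vanish. This yields
\begin{eqnarray*}
\int_\R \frac{g(\omega)\,d\omega}{(z-\1\omega)^{n+1}}
= \frac{\1}{n}\int_\R \frac{g'(\omega)\,d\omega}{(z-\1\omega)^n}.
\end{eqnarray*}
Iterating this identity $n$ times gives
\begin{eqnarray*}
\int_\R \frac{g(\omega)\,d\omega}{(z-\1\omega)^{n+1}}
= \frac{\1^n}{n!}\int_\R \frac{g^{(n)}(\omega)\,d\omega}{z-\1\omega}.
\end{eqnarray*}

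Next I would pass to the limit $z\to 0+$ in the reduced integral. The function $g^{(n)}$ inherits from $g$ the regularity and integrability needed to apply Lemma~\ref{lem.Sokh} with $f=g^{(n)}$ and $y=0$, yielding
\begin{eqnarray*}
\lim_{z\to 0+}\int_\R \frac{g^{(n)}(\omega)\,d\omega}{z-\1\omega}
= \pi g^{(n)}(0) - \1\pi H[g^{(n)}](0).
\end{eqnarray*}
Combining this with the previous display gives the claimed expression
\begin{eqnarray*}
\lim_{z\to 0+}\int_\R \frac{g(\omega)\,d\omega}{(z-\1\omega)^{n+1}}
= \frac{\1^n\pi}{n!}\bigl(g^{(n)}(0) - \1 H[g^{(n)}](0)\bigr).
\end{eqnarray*}

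The only subtle point is justifying that every boundary term in the repeated integration by parts vanishes and that differentiation under the integral and the limit $z\to 0+$ may be interchanged with the integration by parts; both follow from the assumed real analyticity of $g$ and the finiteness of all moments of $g$, which imply polynomial (in fact superpolynomial) decay of $g^{(k)}$ at infinity for every $k$. I do not expect any genuine obstacle beyond this routine verification.
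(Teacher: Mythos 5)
Your proof is correct and matches the paper's argument exactly: integrate by parts $n$ times to reduce $\int_\R g(\omega)\,d\omega/(z-\1\omega)^{n+1}$ to $\frac{\1^n}{n!}\int_\R g^{(n)}(\omega)\,d\omega/(z-\1\omega)$, then apply Lemma~\ref{lem.Sokh} with $f=g^{(n)}$ at $y=0$. The paper's proof is equally brief and does not spell out the vanishing of boundary terms any more than you do, so no additional justification is expected here.
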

\begin{proof}
Using integration by parts $n$ times, we obtain
$$ 
\int_\R {g(\omega) d\omega \over (z-\1\omega)^{n+1}} =
{\1^n \over n!} \int_\R {g^{(n)}(\omega) d\omega \over z-\1\omega}.
$$
The application of Lemma~\ref{lem.Sokh} to the integral on the 
right-hand side yields \eqref{byparts}.
\end{proof}

Below will need the following implications of Lemma~\ref{lem.byparts}.
\begin{cor}\lbl{cor.byparts}
\begin{eqnarray} \lbl{Dprime}
\lim_{z \to 0+} D^\prime(z) &=& -\pi H[g^\prime](0)<0,\\
\lbl{third}
\lim_{z\to 0+} \int_\R {g(\omega) d\omega \over (z-\1\omega)^3} 
&=& {-\pi\over 2} g^{\prime\prime}(0).
\end{eqnarray}
\end{cor}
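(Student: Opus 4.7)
The plan is to apply Lemma~\ref{lem.byparts} directly for $n=1$ and $n=2$, then exploit the evenness of $g$ and its derivatives to simplify.

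For \eqref{Dprime}, I would first differentiate under the integral sign to write $D'(z) = -\int_\R g(\omega)(z-\1\omega)^{-2} d\omega$. Lemma~\ref{lem.byparts} with $n=1$ then yields
\begin{equation*}
\lim_{z\to 0+} D'(z) = -\1\pi\, g'(0) - \pi H[g'](0).
\end{equation*}
Since $g$ is even, $g'$ is odd, so $g'(0)=0$ and the limit reduces to $-\pi H[g'](0)$. To upgrade this to strict negativity I would unpack the Hilbert transform: from the definition used in Lemma~\ref{lem.Sokh}, $H[g'](0)$ is (up to the standard factor $1/\pi$) the Cauchy principal value of $\int_\R g'(\omega)/(-\omega)\,d\omega$. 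The unimodality of $g$ about its maximum at $0$ forces $g'(\omega)$ to have the opposite sign to $\omega$, hence $g'(\omega)/\omega \le 0$ with strict inequality on a set of positive measure. Thus the principal-value integral is strictly negative, giving $H[g'](0) > 0$ and therefore $-\pi H[g'](0) < 0$.

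For \eqref{third}, I would apply Lemma~\ref{lem.byparts} with $n=2$:
\begin{equation*}
\lim_{z\to 0+} \int_\R \frac{g(\omega)\,d\omega}{(z-\1\omega)^3} = -\frac{\pi}{2}\Bigl(g''(0) - \1 H[g''](0)\Bigr).
\end{equation*}
Because $g$ is even, $g''$ is also even, so the integrand $g''(\omega)/(-\omega)$ defining $H[g''](0)$ is odd, and its Cauchy principal value vanishes. Hence $H[g''](0)=0$, leaving exactly $-\frac{\pi}{2}g''(0)$.

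There is really no hard step here: the entire corollary is a direct specialization of Lemma~\ref{lem.byparts}, where the only content beyond substitution is the parity-based vanishing of $g'(0)$ and $H[g''](0)$, together with the sign argument for $H[g'](0)$, both of which are immediate consequences of the standing assumption that $g$ is even and unimodal.
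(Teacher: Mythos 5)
Your proof is correct and follows the paper's own argument in all essentials: both derivations specialize Lemma~\ref{lem.byparts} to $n=1$ and $n=2$, then use the parity of $g'$ and $g''$ (odd and even, respectively) to kill the terms $g'(0)$ and $H[g'']{(0)}$, and both obtain the sign of $H[g']{(0)}$ from unimodality of $g$ via the principal-value integral $\int_0^\infty g'(s)/s\,ds \le 0$. The only cosmetic difference is that the paper re-derives the $n=1$ case of Lemma~\ref{lem.byparts} inline (integration by parts followed by Sokhotski--Plemelj) whereas you invoke the lemma directly, which is cleaner.
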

\begin{proof}
Differentiating $D(z)$  and using \eqref{byparts}, for 
$z$ off the imaginary axis we have
\be\lbl{by-parts}
D^\prime (z)=  -\int_\R {g(\omega)d\omega \over (z-\1\omega)^2}=
-\1 \int_\R {g^\prime (\omega)d\omega \over  z-\1\omega}.
\ee
The integral on the right--hand side is of Cauchy type and Lemma~\ref{lem.Sokh}
applies. By \eqref{Sokh},
\be\lbl{apply-Sokh}
\lim_{z\to 0+} D^\prime (z) = -i\pi g^\prime (0) -\pi H[g^\prime] (0).
\ee
Since $g$ is even, $g^\prime(0)=0$ and $g^\prime$ is odd. Because $g$ 
is also nonnegative and unimodal $g^\prime(x) \le 0, \; x>0.$ Thus,
\be\lbl{g-even}
\begin{split}
H[g^\prime](0) & = {-1\over \pi}\operatorname{PV}
\int_{-\infty}^\infty  {g^\prime(s)ds\over s} \\
&=
{-2\over \pi}\lim_{\epsilon\to 0+} \int_\epsilon^\infty {g^\prime(s)ds\over s} >0.
\end{split}
\ee
The combination of \eqref{Sokh} and \eqref{g-even} yields \eqref{Dprime}.

Likewise, \eqref{third} follows from Lemma~\ref{lem.byparts} 
for $n=2$ and Lemma~\ref{lem.Sokh}.
\end{proof}

\begin{proof}[Proof of Theorem~\ref{thm.Riesz}]
Theorem~\ref{thm.Riesz} follows from \eqref{Bpi-lambda} and \eqref{Dprime}.
\end{proof}

\section{Asymptotic stability of the incoherent state}\lbl{sec.asympt}
\setcounter{equation}{0}

We now return to the problem of stability of the incoherent state. Recall that in the Fourier
space the incoherent state corresponds to the trivial solution $Z=(z_1, z_2,\cdots )=0$. The linearization 
about $Z=0$ shows that it is a neutrally stable equilibrium of \eqref{dZ1}, \eqref{dZj}
for $0\le K<K_c^+$. There are no eigenvalues of $\bT$ for these values of $K$
and the continuous spectrum fills out the imaginary axis.
Nonetheless, we show that the incoherent state is asymptotically stable with respect 
to the weak dual topology.

\begin{thm}\lbl{thm.asympt}
For $K\in [0, K_c^+)$ the trivial solution of \eqref{dZ1}, \eqref{dZj} is an asymptotically 
stable equilibrium for initial data from $X\subset X^\prime$ with
respect to the weak dual topology on $X^\prime$.
\end{thm}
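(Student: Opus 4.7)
The plan is to treat each Fourier mode $z_j$ separately, since \eqref{dZ1}--\eqref{dZj} are decoupled in $j$. For $j\ge 2$ the equation $\partial_t z_j=\1 j\omega z_j$ is solved explicitly by $z_j(t,\omega,x)=e^{\1 j\omega t}z_j(0,\omega,x)$, so for a test function $\phi\in X$ one has
\begin{equation*}
\langle z_j(t),\phi\rangle=\int_I\!\int_\R e^{\1 j\omega t}z_j(0,\omega,x)\,\overline{\phi(\bar\omega,x)}\,g(\omega)\,d\omega\,dx.
\end{equation*}
Since elements of $X=\mathrm{Exp}\otimes L^2(I)$ admit holomorphic extensions into the upper half-plane in $\omega$ with at most exponential growth, and $g$ is real analytic, I would shift the $\omega$-contour into the upper half-plane to pick up an $e^{-jt\,\mathrm{Im}(\omega)}$ factor, obtaining exponential decay of $\langle z_j(t),\phi\rangle$ as $t\to\infty$. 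This is a Landau-damping style argument executed in the dual topology of $X'$.

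For the $j=1$ mode governed by $\mathbf{T}$, the idea is to represent the semigroup $e^{\mathbf{T} t}$ via the inverse Laplace transform against the resolvent and then push the contour into the left half-plane using the meromorphic continuation $\mathcal{R}(\lambda)\colon X\to X'$ constructed in Section~\ref{sec.spectral}. Concretely, for $u\in X$ and $\phi\in X$ and any $x>0$ larger than the spectral abscissa of $\mathbf{T}$ on $H$,
\begin{equation*}
\langle e^{\mathbf{T} t}u,\phi\rangle=\frac{1}{2\pi\1}\int_{x-\1\infty}^{x+\1\infty}e^{\lambda t}\,\langle \mathbf{R}(\lambda)u,\phi\rangle\,d\lambda.
\end{equation*}
The singularities of $\mathcal{R}(\lambda)u$ as an $X'$-valued meromorphic function are precisely the generalized eigenvalues of $\mathbf{T}$, i.e.\ the roots of $\mathcal{D}(\lambda)=2/(K\mu)$ for nonzero eigenvalues $\mu$ of $\mathbf{W}$. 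The crucial step is to check that for $K\in[0,K_c^+)$ no such root lies in the closed right half-plane: on the positive real axis $D(\lambda)$ decreases monotonically from $\mathcal{D}(0+)=\pi g(0)$ to $0$, while for every positive eigenvalue $\mu\le\mu_{max}$ one has $2/(K\mu)\ge 2/(K\mu_{max})>2/(K_c^+\mu_{max})=\pi g(0)$, so no intersection occurs; the analogous argument rules out roots off the real axis in $\{\mathrm{Re}\lambda\ge 0\}$, using that $|D(\lambda)|$ attains its supremum over that half-plane at the origin for unimodal even $g$. The negative eigenvalues of $\mathbf{W}$ are handled by the parallel bound involving $K_c^-$ and the assumption $K\ge 0$.

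With analyticity of $\lambda\mapsto\langle\mathcal{R}(\lambda)u,\phi\rangle$ on $\{\mathrm{Re}(\lambda)\ge 0\}$ and suitable decay as $|\mathrm{Im}(\lambda)|\to\infty$, I can deform the Bromwich contour to a vertical line $\mathrm{Re}(\lambda)=-\eta$ for some $\eta>0$. This yields
\begin{equation*}
|\langle e^{\mathbf{T} t}u,\phi\rangle|\le C(u,\phi)\,e^{-\eta t},
\end{equation*}
so $z_1(t)\to 0$ exponentially in the weak dual topology of $X'$. Combining the two modes establishes asymptotic stability of $Z=0$.

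The main obstacle is the contour-deformation step for the $j=1$ mode. It requires two delicate ingredients: (i) a quantitative bound on $\|\mathcal{R}(\lambda)u\|_{X'}$ that is integrable along vertical lines in a strip about the imaginary axis, which must be extracted from the decomposition in Lemma~\ref{lem.Resolvent} together with the compactness of $\mathbf{W}$; and (ii) the verification that no generalized eigenvalue approaches the imaginary axis as $K\nearrow K_c^+$ from below, which is a consequence of the monotonicity of $\mathcal{D}$ on the positive axis together with the explicit form \eqref{def-D2}. Both points parallel the corresponding arguments in~\cite{Chi15a} for the classical KM and carry over once the graphon-dependent ingredient $\mathbf{W}$ is factored out through the Riesz projection $\tilde{\bPi}_{\mu_{max}}$.
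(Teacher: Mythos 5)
Your overall strategy for the $j=1$ mode---inverse Laplace representation of the semigroup, replacing $\bR(\lambda)$ by the meromorphic continuation $\mathcal{R}(\lambda)\colon X\to X'$, and pushing the Bromwich contour into the left half-plane---is exactly what the paper does, so the approach is essentially the same. Two details deserve correction, though. First, the claim that ``$|D(\lambda)|$ attains its supremum over the closed right half-plane at the origin'' is not obviously true (on the imaginary axis $|\mathcal{D}(\1 y)|^2=\pi^2\bigl(g(y)^2+H[g](y)^2\bigr)$, which need not be dominated by $\pi^2 g(0)^2$) and is not the argument the paper uses; the absence of generalized eigenvalues with $\Rep\lambda\ge 0$ for $K<K_c^+$ was established in \cite{ChiMed16} and should simply be cited. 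Second, the crucial ``no-eigenvalues-in-a-strip'' fact needed to justify moving the contour to $\Rep\lambda=-\varepsilon$ is proved in the paper by an elementary compactness-plus-analyticity argument for \emph{fixed} $K$: since $\mathcal{D}(\lambda)=O(1/|\lambda|)$, roots of $\mathcal{D}(\lambda)=2/(K\mu)$ are confined to a bounded region $D^-_{R_0,\delta}$, and there analyticity of $\mathcal{D}$ forbids accumulation of zeros, so some $\varepsilon>0$ works. Your obstacle (ii) is therefore misstated---nothing here requires uniformity as $K\nearrow K_c^+$. Your obstacle (i) corresponds to the paper's Lemma~\ref{lem.int-to-zero}, which does not estimate $\|\mathcal{R}(\lambda)u\|_{X'}$ globally but shows directly that the two horizontal segments of a truncated rectangle vanish as $R\to\infty$, using the splitting of $\mathcal{R}(\lambda)$ from Lemma~\ref{lem.Resolvent} and uniform boundedness of the $\bW$-part away from generalized eigenvalues. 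Finally, for $j\ge 2$ the paper simply applies the Riemann--Lebesgue lemma to $\langle e^{\1 j\omega t}z_j(0,\cdot),\psi\rangle$; your upward contour shift would yield exponential decay but requires a holomorphic extension of $g$ to a horizontal strip with suitable integrability, which is stronger than the paper's hypotheses and unnecessary for the stated (non-rate) conclusion.
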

\begin{rem}\lbl{rem.asympt}
The stability with respect to the weak dual topology is weaker than that with 
respect to the topology of the Hilbert space $H$. 
Still it is a natural topology  for the problem at hand.
In particular, Theorem~\ref{thm.asympt} implies that the order parameter evaluated 
on the trajectories of the linearized problem tends to $0$ as $t\to\infty$.
\end{rem}
\begin{figure}
\begin{center}
\includegraphics[height=2.0in,width=2.5in]{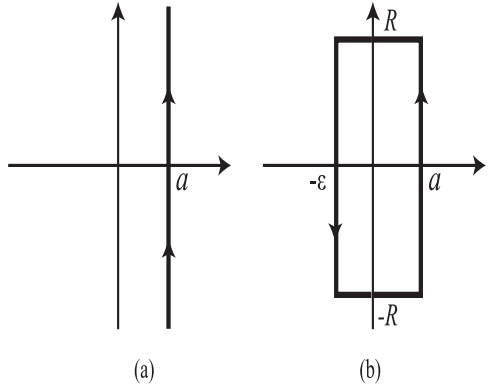}
\caption{Deformation of the integral path for the Laplace inversion formula.}
\label{f.1}
\end{center}
\end{figure}
\begin{proof}
Integrating \eqref{dZj} subject to $z_j(0,\cdot)\in X$, we have
$$
z_j(t,\cdot)=e^{\1 j\omega t} z_j(0,\cdot), \; j\ge 2.
$$
By the Riemann-Lebesgue lemma,
\be\lbl{j-ge-0}
\langle z_j(t,\cdot), \psi(\cdot)\rangle =\int_I \int_\R e^{\1 j\omega t} z_j(0,\omega, x)
\psi(\omega,x) d\omega dx \to 0, \;\mbox{as}\; t\to\infty, \quad \forall \psi\in X.
\ee
We now turn to \eqref{dZ1}.
By the Hille-Yosida theory, operator $\bT$ generates a $C^0$-semigroup $e^{\bT t}$, 
which can be computed using inverse Laplace transform (cf.~\cite{Chi15a}):
\be\lbl{ebT}
e^{t\bT} = \lim_{b\to \infty} \frac{1}{2\pi i} \int^{a+ib}_{a-ib}\! 
e^{\lambda t}(\lambda -\bT)^{-1}d\lambda,  \quad t>0,
\ee
where $a>0$ is arbitrary. Thus, the (continuous) spectrum of $\mathbf{T}$ lies to the 
left of the integration path along $x=a$ (see Fig.~\ref{f.1}\textbf{a}).

For arbitrary $\phi,\psi \in H$, we have
\be\lbl{pairing}
\left( e^{t\bT} \phi, \psi\right)_H=  \lim_{b\to \infty} \frac{1}{2\pi i} 
\int^{a+ib}_{a-ib}\! e^{\lambda t}\left( (\lambda -\bT)^{-1} \phi, 
\psi \right)_H  d\lambda.
\ee
For $\phi, \psi \in X$, $\left( (\lambda -\bT)^{-1} \phi, \psi \right)_H$ is an analytic function
in the right half--plane, which can be extended to the entire complex plane as a 
meromorphic function $\langle \mathcal{R}(\lambda )\phi, \psi \rangle$.
Thus, 
\be\lbl{pairing-1}
\langle e^{t\bT} \phi, \psi \rangle = 
 \lim_{b\to \infty} \frac{1}{2\pi i} 
\int^{a+ib}_{a-ib}\! e^{\lambda t}  \langle \mathcal{R}(\lambda )\phi, \psi \rangle  d\lambda
\quad \forall \phi, \psi \in X.
\ee 

Let $K\in [0, K_c^+)$ be fixed. Next we claim that one can choose $\varepsilon=\varepsilon (K)>0$
such that there are no generalized eigenvalues of $\bT$ on or inside the contour
$$
C_{\varepsilon,R}:\quad a-\1R\to a+\1 R \to -\varepsilon +\1 R\to -\varepsilon -\1 R\to a-\1R
\quad \mbox{(Fig.~\ref{f.1}\textbf{b})}
$$
for every $R>0$. To construct $C_{\varepsilon,R}$ with the desired property,
we first fix $\delta>0$. Then we recall that 
generalized eigenvalues of $\bT$ satisfy \eqref{one-can-see}. From \eqref{def-D2}, under
our assumptions on $g$, there exists $R_0=R_0(\delta)$ such that there are no roots of 
\eqref{one-can-see} in the region 
$$
D^+_{R_0,\delta}=\{z\in \C:\; |z|\ge R_0\; \& -\delta < \mathrm{Re}(z) \leq a\},
$$
because (\ref{one-can-see}) can be reduced to $2/(K\mu)=O(1/ |\lambda|)$ 
in $D^+_{R,\delta}$ for $R\gg 1$.
On the other hand, $\mathcal{D}(\lambda)$ is holomorphic. Thus, the 
set of roots of \eqref{one-can-see} (i.e., the set of generalized eigenvalues) does 
not have accumulation points in
$$
D^-_{R_0,\delta}=\{z\in \C:\; |z|\leq R_0\; \& -\delta < \mathrm{Re}(z) \leq a \}.
$$
Thus, we can choose $\varepsilon>0$ such that
there are no generalized eigenvalues in $D^+_{R_0, \varepsilon } \cup D^-_{R_0, \varepsilon }$. 
This completes the construction of $C_{\varepsilon,R}$ with the desired property for any $R>0$.

By the Cauchy Integral theorem,
\be\lbl{CIT}
\oint_{C_{\varepsilon,R}} e^{\lambda t}  \langle \mathcal{R}(\lambda )\phi, 
\psi \rangle  d\lambda=0 \quad \forall \phi, \psi \in X,
\ee 
for any $R>0$,
and
\be\lbl{useCIT}
\int_{a-\1 R}^{a+\1 R} e^{\lambda t} \langle \mathcal{R}(\lambda )\phi, 
\psi \rangle d\lambda =
\left( \int_{-\varepsilon-\1 R}^{-\varepsilon+\1 R} - \int^{-\varepsilon+\1 R}_{a+\1 R}-
\int_{-\varepsilon -\1 R}^{a-\1 R}\right) e^{\lambda t}
\langle \mathcal{R}(\lambda )\phi, 
\psi \rangle d\lambda.
\ee
The integral on the left--hand side of \eqref{useCIT} exists, by the Hille-Yosida theory.
Therefore, the integrals on the right--hand exist too.
Below, we show that the last two integrals on the right--hand side of \eqref{useCIT}
tend to $0$ as $R\to\infty$.
Sending $R\to\infty$ in \eqref{useCIT} and using \eqref{pairing-1}, we 
arrive at
\be
\begin{split}
\langle e^{t\bT} \phi, \psi \rangle &= 
 \lim_{R\to \infty} \frac{1}{2\pi i} 
\int^{-\varepsilon+iR}_{-\varepsilon-iR}\! e^{\lambda t}  
\langle \mathcal{R}(\lambda )\phi, \psi \rangle  d\lambda \\
&=  \frac{e^{-\varepsilon t}}{2\pi i} \lim_{R\to \infty} \int^{R}_{-R}i e^{i \lambda t} 
\langle \mathcal{R}(i\lambda -\varepsilon  )\phi, \psi \rangle d\lambda \\
&= o(e^{-\varepsilon t}), \quad \forall \phi, \psi \in X
\end{split}
\ee
as $t\to \infty$ because the integral
$$
\lim_{R\to \infty}  \int^{R}_{-R}ie^{i\lambda t} 
\langle \mathcal{R}(i\lambda -\varepsilon  )\phi, \psi \rangle d\lambda
$$
exists and also tends to zero as $t\to \infty$ due to the Riemann-Lebesgue lemma.

\end{proof}

It remains to prove the following lemma.
\begin{lem}\lbl{lem.int-to-zero}
For $K\in [0, K^+_c)$, 
\be\lbl{integrals}
\lim_{R\to\infty}  \int^{-\varepsilon+\1 R}_{a+\1 R} e^{\lambda t}
\langle \mathcal{R}(\lambda )\phi, \psi \rangle  d\lambda = 
\lim_{R\to\infty}  \int_{-\varepsilon -\1 R}^{a-\1 R} e^{\lambda t}
\langle \mathcal{R}(\lambda )\phi, \psi \rangle  d\lambda =0 \quad \forall \phi, \psi \in X.
\ee
\end{lem}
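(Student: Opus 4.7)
The plan is to show that $|\langle \mathcal{R}(\lambda)\phi, \psi\rangle|$ decays like $O(1/R)$ uniformly in $\lambda$ along the two horizontal segments $\lambda = x\pm\1 R$, $x\in[-\varepsilon, a]$, as $R\to\infty$, while $|e^{\lambda t}|\le e^{at}$ remains bounded there. Since each segment has fixed length $a+\varepsilon$, such a uniform pointwise bound forces both integrals in \eqref{integrals} to vanish as $R\to\infty$.

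To obtain the decay, I would start from the representation of $\mathbf{R}(\lambda)$ provided by Lemma~\ref{lem.Resolvent} in the right half-plane and continue it meromorphically onto the strip $-\varepsilon\le\mathrm{Re}(\lambda)\le a$ in the $X$-to-$X'$ sense, as in \eqref{def-R}. By the construction of $\varepsilon$ in the proof of Theorem~\ref{thm.asympt}, the characteristic equation \eqref{one-can-see} has no roots in this strip, so the middle factor $(\bI-(K/2)\mathcal{D}(\lambda)\bW)^{-1}$ is regular on $C_{\varepsilon, R}$; moreover $\mathcal{D}(\lambda)\to 0$ as $|\mathrm{Im}(\lambda)|\to\infty$ uniformly in the strip, so a Neumann series gives a uniform $L^2(I)$ bound on this inverse for $R$ sufficiently large. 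The problem therefore reduces to proving that for $\phi,\psi\in X$ the decay estimate
\begin{equation*}
|\langle\mathcal{A}(\lambda)\phi, \psi\rangle| + \|\bD(\lambda)\phi\|_{L^2(I)} = O(1/R)
\end{equation*}
holds uniformly for $\mathrm{Re}(\lambda)\in[-\varepsilon, a]$ as $R\to\infty$, with the operators understood in their analytically continued sense when $\mathrm{Re}(\lambda)\le 0$.

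On the portion $\mathrm{Re}(\lambda)>0$ this uniform decay is straightforward: the Cauchy-type integrals defining $A(\lambda)$ and $\bD(\lambda)$ yield a $1/R$ factor once the $\omega$-range is split into a bounded part, on which the kernel $(\lambda-\1\omega)^{-1}$ is of size $O(1/R)$, and a tail part that is controlled by the finite moments of $g$. On the portion $\mathrm{Re}(\lambda)\le 0$, I would use the defining property of $X = \mathrm{Exp}\otimes L^2(I)$, namely that every $\phi(\omega, x)\in X$ extends holomorphically to a neighborhood of the upper half-plane in $\omega$ with at most exponential growth, to justify deforming the real-axis contour in the $\omega$-integral upward past the pole at $\omega=-\1\lambda$. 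The residue produced by this deformation reproduces precisely the correction term $2\pi g(-\1\lambda)\int_I\phi(-\1\lambda, x)\psi(-\1\lambda, x)\,dx$ appearing in \eqref{def-A}, and the shifted integral inherits an $O(1/R)$ kernel.

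Combining these estimates yields $|\langle\mathcal{R}(\lambda)\phi, \psi\rangle| = O(1/R)$ uniformly on the horizontal segments; multiplying by $|e^{\lambda t}|\le e^{at}$ and integrating over a segment of length $a+\varepsilon$ produces an $O(e^{at}/R)$ bound that vanishes as $R\to\infty$, establishing \eqref{integrals}. The main technical obstacle lies in the uniform control on the left half-plane portion of the horizontal segments, where one must carefully balance the residue contribution from analytic continuation against the exponential growth permitted by the space $\mathrm{Exp}$; this is exactly where the design of the rigged Hilbert space $X\subset H\subset X'$, together with the all-moment analyticity of $g$, does the essential work.
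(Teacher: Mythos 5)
Your proposal follows essentially the same route as the paper's proof: start from the resolvent representation of Lemma~\ref{lem.Resolvent}, control the middle factor $\bigl(\bI-\tfrac{K}{2}D(\lambda)\bW\bigr)^{-1}$ uniformly on the strip using the absence of generalized eigenvalues there together with $\mathcal{D}(\lambda)\to 0$ as $|\mathrm{Im}\,\lambda|\to\infty$, and then show that the remaining Cauchy-type factors $\mathcal{A}(\lambda)$ and $\bD(\lambda)$ become small on the horizontal segments. Two remarks on where you diverge from the printed argument. First, you commit to a uniform $O(1/R)$ rate; the paper does not claim any rate and instead uses a tail-cutoff plus uniform-convergence-on-compacts argument that yields $o(1)$. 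The $O(1/R)$ bound is not immediate from a naive modulus estimate near $\mathrm{Re}(\lambda)=0$, $\omega\approx R$, where the modulus of the kernel develops a $\log$-type singularity as $\mathrm{Re}(\lambda)\to 0^+$; one has to invoke the Sokhotski--Plemelj cancellation to see the bound. Since the segment has fixed length $a+\varepsilon$, any uniform $o(1)$ bound would already close the argument, so this overclaim is harmless but should be stated more carefully. Second, you make the left half-plane portion explicit by deforming the $\omega$-contour upward past the pole at $\omega=-\1\lambda$, which reproduces the residue term in the definition of $\mathcal{A}(\lambda)$ and reduces the estimate to the right-half-plane one; the paper splits the segment at the imaginary axis, proves the right-half-plane piece, and merely asserts that the remaining piece ``is analyzed similarly.'' In that respect your write-up is actually more complete than the published proof.
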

\begin{proof}
We show that the integral $\int^{-\varepsilon +iR}_{a+iR}\! e^{\lambda t} \langle \mathcal{R}(\lambda )\phi, \psi \rangle d\lambda$
tends to zero as $R\to \infty$.
The second integral $\int^{a-iR}_{-\varepsilon -iR} e^{\lambda t}\langle \mathcal{R}(\lambda )\phi, \psi \rangle d\lambda$
can be treated in the same way.
Further, we decompose the integral into two integrals as
\begin{eqnarray}
\int^{-\varepsilon +iR}_{a+iR}\! e^{\lambda t} \langle \mathcal{R}(\lambda )\phi, \psi \rangle d\lambda
 = \int^{iR}_{a+iR}\! e^{\lambda t} \langle \mathcal{R}(\lambda )\phi, \psi \rangle d\lambda
  + \int^{-\varepsilon +iR}_{iR}\! e^{\lambda t} \langle \mathcal{R}(\lambda )\phi, \psi \rangle d\lambda.
\label{lem4.5}
\end{eqnarray}
We show that the first integral on the right hand side tends to zero as $R\to \infty$.
For $\mathrm{Re}(\lambda ) > 0$,  we have
\begin{eqnarray*}
\langle  \mathcal{R}(\lambda) \phi, \psi \rangle  =  \langle A(\lambda) \phi,\psi \rangle  +{K\over 2} 
\langle  A(\lambda) \bW \left(\bI -{K\over 2} D(\lambda) \bW \right)^{-1}
\bD(\lambda) \phi, \psi \rangle,
\end{eqnarray*}
see (\ref{rewrite-Res}).
For the first term, we have
\begin{eqnarray*}
& & \int^{iR}_{a+iR}\! e^{\lambda t} \langle A(\lambda )\phi, \psi \rangle d\lambda \\
&=& e^{iRt} \int^{0}_a e^{\lambda t} \int_I \int_{\R} \frac{1}{\lambda +i(R-\omega )} \phi (\omega ,x)\psi (\omega ,x)g(\omega )d\omega dxd\lambda .
\end{eqnarray*}
Since the integral above is finite, for any $\varepsilon _0 > 0$, there exists $L>0$ such that
\begin{eqnarray*}
\left| \int^{0}_a e^{\lambda t} \int_I \int_{|\omega |>L} \frac{1}{\lambda +i(R-\omega )} \phi (\omega ,x)\psi (\omega ,x)g(\omega )d\omega dxd\lambda  \right| < \varepsilon _0
\end{eqnarray*}
On the other hand, the integrand 
\begin{eqnarray*}
e^{\lambda t} \frac{1}{\lambda +i(R-\omega )} \phi (\omega ,x)\psi (\omega ,x)g(\omega ) \to 0,
\end{eqnarray*}
as $R\to \infty$ uniformly in $x\in I,\, \omega \in (-L,L)$ and $\lambda \in (0,a)$.
This implies that the integral 
$$
\int^{iR}_{a+iR}\! e^{\lambda t} \langle A(\lambda )\phi, \psi \rangle d\lambda \to 0, \quad
\mbox{as}\;R\to \infty.
$$

Consider
\begin{eqnarray*}
\widetilde{\phi}_\lambda  = \bW \left(\bI -{K\over 2} D(\lambda) \bW \right)^{-1}\bD(\lambda) \phi.
\end{eqnarray*}
The singularity of $\widetilde{\phi}_\lambda$ is a generalized
eigenvalue of $\bT$ (cf.~\eqref{link}).
For $0<K<K_c^+$, there are no generalized eigenvalues  of $\bT$ in the right half-plane and 
on the imaginary axis.
Further, $D(\lambda ) \to 0$ as $|\lambda | \to \infty$ and, thus,
$\bD(\lambda)\to 0$ too. 
This shows that $\widetilde{\phi}_\lambda$ is bounded uniformly in
$\lambda $ on the region 
$\mathrm{Re}(\lambda ) \geq 0$.
By replacing $\phi$ with $\widetilde{\phi}_\lambda$ in the first estimate of the integral of 
$\langle A(\lambda) \phi,\psi \rangle$, we find that
$\int^{iR}_{a+iR}\! e^{\lambda t} \langle A(\lambda )\widetilde{\phi}_\lambda , \psi \rangle d\lambda$
tends to zero.
This shows that $\int^{iR}_{a+iR}\! e^{\lambda t} \langle \mathcal{R}(\lambda )\phi, \psi \rangle d\lambda$
decays to zero as $R\to \infty$.
The second integral in (\ref{lem4.5}) is analyzed in similarly. This completes the proof of 
Lemma~\ref{lem.int-to-zero}. 
\end{proof}

\section{Bifurcation with a one-dimensional null space}\lbl{sec.bif}
\setcounter{equation}{0}

In the previous section, we proved asymptotic stability of the equilibrium at the origin 
of the linearized system \eqref{dZ1}, \eqref{dZj} for $K\in [0, K_c^+).$ 
On the other hand,
for $K>K_c^+$ there is a positive eigenvalue in spectrum of the linearized problem 
(cf.~\cite{ChiMed16}). This signals a bifurcation at $K_c^+$. This bifurcation is analyzed
in this present section. As in the classical KM, the loss of stability of the incoherent state
at $K_c^+$ and the development of partial synchronization for $K>K_c^+$ is best seen
in terms of the order parameter.

Throughout this section, we assume that the largest positive
eigenvalue $\mu_{max}$ of 
$\mathbf{W}$
with the eigenfunction $w_{max}$ is simple. Furthermore, we assume that at $K_c^+$
there is a (one-dimensional) smooth center manifold of the equilibrium 
at the origin of \eqref{dZ1-1},  \eqref{dZj-1}\footnote{The proof of existence of the 
center manifold is a technical problem and is beyond the scope of this paper
(see \cite{Chi15a} for the proof of existence of the center manifold in the 
original KM).}. Under these assumptions, below we show that the order parameter
undergoes a supercritical pitchfork bifurcation at $K_c^+$. The stable branch of equilibria 
bifurcating from $0$ is given by
\be\lbl{fork}
h_\infty (K)=
{ g(0)^2 \pi^{3/2} \over \sqrt{-g^{''}(0) }} \mu_{max}^{3/2} 
\sqrt{\frac{1}{C(x)}}\sqrt{K-K_c^+} +o(\sqrt{K-K_c^+}), \quad K>K_c^+,
\ee 
where
\be\lbl{C-of-x}
C(x):=\frac{ \tilde \bPi_{\mu_{max}}  (|w_{max}|^2 w_{max}) }{|w_{max}|^2 w_{max}}.
\ee
Formula \eqref{fork} generalizes the classical Kuramoto's formula describing
the pitchfork bifurcation in the all-to-all coupled model to the KM on graphs.
The network structure enters into the description of the pitchfork bifurcation
through the largest eigenvalue $\mu_{max}$ and the corresponding eigenspace.

\subsection{Preparation}
Throughout this section, we assume that $\mu_{max}$ is a simple
eigenvalue of $\mathbf{W}$.
Let $K=K_c^++\epsilon$ with $0<\epsilon\ll 1$ and rewrite \eqref{dZ1-1},\eqref{dZj-1}
as follows
\begin{eqnarray}\lbl{dZ1-2}
{\p\over \p t} z_1 &=& \bT_0 z_1 +{\epsilon\over 2} \bP z_1
-{K\over 2} \overline{\bP z_1} z_2,\\
\lbl{dZj-2}
{\p\over \p t} z_j &=& \1 j\omega z_j +{jK\over 2} 
\left( \bP z_1z_{j-1}-\overline{\bP z_1} z_{j+1}\right),
\quad j=2,3,\dots,
\end{eqnarray}
where $\bT_0$ is $\bT$ evaluated at $K=K_c$ and $\bT = \bT_0 + \epsilon \bP/2$.

For small $\epsilon>0$,
the equilibrium of \eqref{dZ1-2}, \eqref{dZj-2} at the origin 
has a $1D$ unstable manifold. 
We reduce the dynamics on the $1D$ unstable manifold, which we approximate 
by the center manifold of the origin for $K=K_c^+$, i.e., for $\epsilon=0$.
For the latter, we assume $z_k=h_k(z_1), k=2,3,\dots,$ on the center manifold, where $h_k$ 
are smooth functions such that $h_k(0)=h_k^\prime(0)=0$.

Let ${\mathbf \Pi}_0$ be the projection to the eigenspace of $\lambda =0$ spanned by $v_c^+$ (cf.~Section 3.4).
To track the evolution on the slow manifold we adopt the following Ansatz:
\begin{eqnarray}\lbl{AN}
z_1 &=& {\mathbf \Pi}_0 z_1 + (\bI -{\mathbf \Pi}_0 )z_1 
 =\alpha c(t) v_c^+ +O(\alpha^2),\\
\lbl{stable}
z_k &= &h_k(z_1)=O(\alpha^2),\quad k=2,3,\dots,\\
\lbl{eps-a2}
\epsilon &=& \alpha^2,
\end{eqnarray}
where $\alpha>0$ is a small parameter, $c(t)$ is the coordinate along
the center manifold, and $v_c^+$ is the generalized eigenfunction of $\bT_0$
corresponding to the zero eigenvalue (cf.~\eqref{lim-eigen}). The Ansatz
\eqref{AN}-\eqref{eps-a2} follows right away once existence of the center manifold
is shown.

We will start by deriving several auxiliary facts that follow from
the Ansatz \eqref{AN}-\eqref{eps-a2}. First, using \eqref{AN}-\eqref{eps-a2} and Theorem~\ref{thm_chiba},
from \eqref{dZ1-2}, we have
\be\lbl{estimate-dz1}
\dot z_1 = \bT_0z_1 +O(\alpha^2)
 = \bT_0^\times (\alpha c(t)v_c^+) + O(\alpha ^2)=O(\alpha^2).
\ee

Next, we estimate the order parameter.
\begin{lem}\lbl{lem.order}
\be\lbl{order}
h(t,x) = \alpha c(t) w_{max}(x) +O(\alpha^2).
\ee
\end{lem}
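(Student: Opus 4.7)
The plan is to identify the order parameter as the action of $z_1(t,\cdot,\cdot)\in X'$ on a specific test function in $X$, and then invoke the Ansatz \eqref{AN} together with the explicit formula \eqref{critical-action} for the critical eigenfunction $v_c^+$. For each fixed $x\in I$, the definition \eqref{corder} can be rewritten as the dual pairing
$$h(t,x) = \langle z_1(t,\cdot,\cdot),\, f_x \rangle, \qquad f_x(\omega,y) := W(x,y),$$
where $f_x$ is regarded as a member of $X$ by trivial (constant) extension in $\omega$: it is entire and bounded in $\omega$, and lies in $L^2(I)$ in $y$, hence belongs to $X = \mathrm{Exp}\otimes L^2(I)$.

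Substituting $z_1 = \alpha c(t) v_c^+ + O(\alpha^2)$ and using continuity of the pairing against the fixed element $f_x$ gives
$$h(t,x) = \alpha c(t)\, \langle v_c^+, f_x \rangle + O(\alpha^2),$$
so the lemma reduces to showing $\langle v_c^+, f_x \rangle = w_{max}(x)$. I would apply \eqref{critical-action} with $u = f_x$; since $W$ is real, $f_x^*(\omega,y) = W(x,y)$, and the $L^2(I)$-inner product factors out of the $\omega$-integral:
$$(w_{max},\, f_x^*(\omega,\cdot))_{L^2(I)} = \int_I w_{max}(y)\, W(x,y)\, dy = \bW[w_{max}](x) = \mu_{max}\, w_{max}(x),$$
by the symmetry of $W$ and the eigenvalue equation $\bW w_{max}=\mu_{max} w_{max}$. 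Therefore
$$\langle v_c^+, f_x \rangle = \frac{K_c^+\, \mu_{max}}{2}\, w_{max}(x)\, \lim_{\lambda\to 0+} D(\lambda).$$

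Finally, I would evaluate $\lim_{\lambda\to 0+} D(\lambda)$ using Lemma~\ref{lem.Sokh}: with $f=g$ and $y=0$, the limit equals $\pi g(0) - \1\pi H[g](0)$, and since $g$ is even, $H[g](0)=0$, so the limit is $\pi g(0)$. Inserting $K_c^+ = 2/(\pi g(0)\mu_{max})$ from \eqref{wKc} collapses the constants to $1$, yielding $\langle v_c^+, f_x\rangle = w_{max}(x)$ and hence \eqref{order}. The only mildly subtle point is recognizing that the integrals against $z_1\in X'$ must be read as the dual pairing and that $f_x$ genuinely lies in $X$ (uniformly in $x$, in the sense needed to control the remainder); once those identifications are in place, the proof is a short combination of the eigenvalue equation for $\bW$ and the Sokhotski--Plemelj computation of $D(0+)$.
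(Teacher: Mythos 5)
Your proof is correct and follows essentially the same route as the paper: both substitute the Ansatz $z_1=\alpha c(t)v_c^+ + O(\alpha^2)$, use the explicit form of $v_c^+$, and collapse the resulting integral via Fubini, the eigenvalue equation $\bW w_{max}=\mu_{max}w_{max}$, and the identification $\tfrac{K_c^+\mu_{max}}{2}D(0+)=1$. The only cosmetic differences are that you phrase $h=\bP z_1$ as the dual pairing $\langle z_1, W(x,\cdot)\rangle$ and re-derive $D(0+)=\pi g(0)$ from Sokhotski--Plemelj rather than citing \eqref{one-can-see} at the critical point; both amount to the same computation.
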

\begin{proof}
\be\lbl{order-1}
\begin{split}
h &=\mathbf{P}z_1 = \mathbf{P} \left(\alpha c(t) v_c^+ +O(\alpha^2)\right)\\
&= \alpha c(t) {K_c^+\over 2}  \l \int_\R\int_I {W(x,y) w_{max}(y) g(\omega)
\over \lambda -\1\omega} dyd\omega +O(\alpha^2).
\end{split}
\ee

Applying the Fubini theorem, \eqref{one-can-see} and \eqref{def-D}, we have
\be\lbl{order-2}
\begin{split}
h &= \alpha c(t) {K_c^+\over 2}  (\bW w_{max}) D(0+) + O(\alpha^2)\\
& = \alpha c(t) {K_c^+\mu_{max}\over 2} D(0+) w_{max} + O(\alpha^2)\\
&=\alpha c(t) w_{max} + O(\alpha^2).
\end{split}
\ee
\end{proof}

\begin{lem}\lbl{lem.z2}
\be\lbl{z2}
z_2= \left({\alpha c(t) K_c^+\over 2}\right)^2 
\wlim {w_{max}^2\over (\lambda-\1\omega)^2}+ O(\alpha^3).
\ee
\end{lem}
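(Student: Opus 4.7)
The plan is to substitute the center manifold ansatz \eqref{AN}--\eqref{eps-a2} into equation \eqref{dZj-1} with $j=2$ and invert the resulting stationary equation in the dual space $X'$. On the center manifold $z_k = h_k(z_1)$ with $h_k(0)=h_k^\prime(0)=0$, so $z_2,z_3 = O(\alpha^2)$ and
\[
\dot z_2 \;=\; h_2^\prime(z_1)\,\dot z_1 \;=\; O(\alpha)\cdot O(\alpha^2) \;=\; O(\alpha^3)
\]
by \eqref{estimate-dz1}. Consequently $\overline{\bP z_1}\,z_3 = O(\alpha)\cdot O(\alpha^2)=O(\alpha^3)$, and combining Lemma~\ref{lem.order} with the ansatz gives $\bP z_1 \cdot z_1 = \alpha^2 c(t)^2\, w_{max}\, v_c^+ + O(\alpha^3)$. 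Since $K=K_c^+ + \alpha^2$, equation \eqref{dZj-1} at $j=2$ reduces to the stationary relation
\[
2\1\omega\, z_2 \;=\; -K_c^+\alpha^2 c(t)^2\, w_{max}\, v_c^+ + O(\alpha^3)
\]
in $X'$. Using \eqref{lim-eigen}, the right-hand side equals $-\tfrac{1}{2}\bigl(\alpha c(t)\, K_c^+\bigr)^{2} \wlim w_{max}^2/(\lambda-\1\omega)$.

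To invert the multiplication operator $2\1\omega$ on $X'$, I would propose the explicit candidate
\[
z_2^\star \;=\; \Bigl(\frac{\alpha c(t)\, K_c^+}{2}\Bigr)^{\!2}\wlim \frac{w_{max}^2}{(\lambda-\1\omega)^2}
\]
and verify it directly using the algebraic identity
\[
\frac{\1\omega}{(\lambda-\1\omega)^2} \;=\; -\frac{1}{\lambda-\1\omega} \;+\; \frac{\lambda}{(\lambda-\1\omega)^2}.
\]
Multiplying through by $w_{max}^2$ and applying $\wlim$, the first term on the right reproduces the required forcing. The second term must vanish in the weak dual topology: for any test function $\phi\in X$, one integration by parts in $\omega$ converts the double pole into a simple Cauchy integral, which by Lemma~\ref{lem.Sokh} is bounded uniformly in $\lambda$, so that the prefactor $\lambda$ drives the entire pairing to zero as $\lambda\to 0+$.

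The main obstacle is conceptual rather than computational: the multiplication operator $2\1\omega$ has continuous spectrum equal to the whole imaginary axis, so it is not invertible on $H$, and the stationary equation for $z_2$ genuinely forces us to work in $X'$. The regularised reciprocal supplied by $\wlim$ is the same mechanism that promotes $v_c^+$ from a divergent pointwise expression to a well-defined element of $X'$ in \eqref{lim-eigen}. Uniqueness of $z_2$ modulo $O(\alpha^3)$ is automatic from the center manifold reduction, since $z_2 = h_2(z_1)$ is determined as the unique smooth function of $z_1$ whose graph is locally invariant under the flow with $h_2(0)=h_2^\prime(0)=0$.
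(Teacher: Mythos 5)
Your proposal is correct and follows essentially the same route as the paper: derive the stationary relation $0 = 2\1\omega z_2 + K(\bP z_1)z_1 + O(\alpha^3)$ from the center-manifold ansatz and then verify the proposed $z_2$ by rewriting $\1\omega/(\lambda-\1\omega)^2 = -1/(\lambda-\1\omega) + \lambda/(\lambda-\1\omega)^2$ and observing cancellation of the leading terms. The only difference is cosmetic — you spell out (via one integration by parts and Lemma~\ref{lem.Sokh}) why $\wlim\,\lambda\, w_{max}^2/(\lambda-\1\omega)^2 = 0$ in $X'$, a step the paper performs silently.
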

\begin{proof}
Using \eqref{AN}-\eqref{eps-a2} and \eqref{estimate-dz1}, we obtain
\be\lbl{hot}
\begin{split}
\dot z_2 & = h_2^\prime (z_1) \dot z_1 = O(\alpha^3),\\
(\overline{\bP z_1})z_3&= O(\alpha^3).
\end{split}
\ee
By plugging  \eqref{hot} into \eqref{dZj-2} for $j=2$, we obtain
\be\lbl{we-obtain}
0=2\1\omega z_2 + K(\bP z_1)z_1 +O(\alpha^3).
\ee
Next we plug in the expressions for $z_1, \bP z_1$, and  $z_2$ 
(see \eqref{AN}, \eqref{order}, \eqref{z2})
into  \eqref{we-obtain} to verify that they satisfy this equation up to  $O(\alpha^3)$ 
terms. Specifically, we have
\begin{equation*}
\begin{split}
 2\1\omega z_2 + K(\bP z_1)z_1& = 2\1\omega \left({ \alpha c(t) K_c^+\over 2}\right)^2
\wlim {w_{max}^2\over (\lambda -\1\omega)^2}  \\
&+ K\left(\alpha c(t) w_{max}+O(\alpha^2)\right)\left(\alpha c(t) v^+_c  +O(\alpha^2)\right) +O(\alpha^3)\\
&=-\alpha^2 c(t)^2 {(K_c^+)^2\over 2}
\wlim { (\lambda -\1\omega)-\lambda \over (\lambda -\1\omega)^2}w_{max}^2 \\
&+ K_c^+\alpha^2 c(t)^2 w_{max} {K_c^+\over 2} \wlim {w_{max}\over \lambda -\1\omega} 
+O(\alpha^3)\\
&=-\alpha^2 c(t)^2 {(K_c^+)^2\over 2} 
\wlim {w_{max}^2 \over \lambda -\1\omega} +\alpha^2 c(t)^2 {(K_c^+)^2\over 2} 
\wlim {w_{max}^2 \over \lambda -\1\omega} +O(\alpha^3)\\
&=O(\alpha^3).
\end{split}
\end{equation*}

\end{proof}


\subsection{The slow manifold reduction}

Projecting both sides of \eqref{dZ1-2} onto the center subspace, we have
\be\lbl{project-center}
\bPi_0 \dot z_1= \bPi_0 \bT_0 z_1 +{\epsilon\over 2} \bPi_0 h -
{K\over 2} \bPi_0 (\overline{h} z_2).
\ee
Using \eqref{AN}, we have
\be\lbl{collect}
\begin{split}
\bPi_0 \dot z_1 & =\alpha \dot c(t) v_c^+,\\
\bPi_0 \bT_0 z_1 &= \bT_0^\times \bPi_0 z_1=\alpha c(t) \bT_0^\times v_c^+ =0.
\end{split}
\ee

Further,
\be\lbl{Pih}
\bPi_0 h = g_1\wlim (\lambda-\1\omega)^{-1} \tilde\bPi_{\mu_{max}}
\bD(\lambda) h.
\ee
To evaluate \eqref{Pih}, we take the following steps 
\begin{equation*}
\begin{split}
\wlim \bD(\lambda) h
&= \wlim \int_\R \frac{\alpha c(t) w_{max}}{\lambda -i\omega }g(\omega )d\omega +O(\alpha ^2) \\
&= \alpha c(t) w_{max} \mathcal{D}(0+) +O(\alpha^2) \\
&= {2\alpha c(t) w_{max}\over K_c^+ \mu_{max}} +O(\alpha^2)
\end{split}
\end{equation*}
and 
$$
\l \tilde\bPi_{\mu_{max}} \bD(\lambda) h
= {2\alpha c(t) w_{max}\over K_c^+ \mu_{max}}+O(\alpha^2).
$$

Finally, 
\be\lbl{Pih-1}
\begin{split}
\bPi_0 h &= \frac{2\alpha  c(t) }{K_c^+ \mu_{max}}g_1\wlim { w_{max} \over \lambda -\1\omega}+
O(\alpha^2)\\
&=\frac{\alpha c(t)}{\mu_{max}} g_1 \left({2\over K_c^+}\right)^2 v_c^+ +O(\alpha^2). 
\end{split}
\ee

Similarly, to evaluate  
\be\lbl{Pihz2}
\bPi_0 (\overline{h} z_2) =g_1 \wlim (\lambda-\1\omega)^{-1} 
\tilde \bPi_{\mu_{max}} \bD (\lambda) (\overline{h} z_2),
\ee
we first compute
\be\lbl{Pihz2-by-step}
\begin{split}
\wlim \bD(\lambda) (\overline{h} z_2) &= \alpha^3 |c(t)|^2 c(t)
\left({K_c^+\over 2}\right)^2
|w_{max}|^2 w_{max} \l \int_\R {g(\omega)d\omega\over (\lambda -\1\omega)^3} + O(\alpha ^4)\\
&=-\alpha^3 |c(t)|^2 c(t) g_2 \left({K_c^+\over 2}\right)^2
|w_{max}|^2 w_{max} +O(\alpha^4),
\end{split}
\ee
where
\be\lbl{def-g2}
g_2= {\pi g''(0)\over 2}.
\ee
By plugging \eqref{Pihz2-by-step} into \eqref{Pihz2}, we obtain
\be\lbl{Pihz2-1}
\bPi_0 (\overline{h} z_2) =- \alpha^3 |c(t)|^2 c(t)  g_1 g_2 \frac{K_c^+}{2} 
\frac{\tilde \bPi_{\mu_{max}} (|w_{max}|^2 w_{max})}{w_{max}}v_c^+ +O(\alpha^4).
\ee

By plugging \eqref{collect}, \eqref{Pih-1}, and \eqref{Pihz2-1} into \eqref{project-center},
dividing both sides by $\alpha$ and $v_c^+$ and keeping terms up to $O(\alpha^2)$ we have
\be\lbl{slow}
\dot c =  {2 g_1 c\over (K_c^+)^2\mu_{max}}
\left( \epsilon + \frac{(K_c^+)^4 \mu_{max}g_2}{8}\alpha^2 |c|^2 \frac{\tilde \bPi_{\mu_{max}} (|w_{max}|^2 w_{max})}{w_{max}}\right) 
+ O(\alpha ^3).
\ee
It is instructive to recast \eqref{slow} in terms of the order parameter $h$ (cf.~\eqref{corder}).
By Lemma~\ref{lem.order},
$$
h(t,x)=\alpha c(t)w_{max} + O(\alpha^2).
$$
Thus, by multiplying both sides of \eqref{slow} by $\alpha w_{max}$ and neglecting higher
order terms, we  obtain
\be\lbl{slow-h}
\begin{split}
\dot h &=  {2 g_1 \over (K_c^+)^2 \mu_{max}} h
\left( \epsilon +\frac{(K_c^+)^4 \mu_{max}g_2}{8}\frac{\tilde \bPi_{\mu_{max}} (|w_{max}|^2 w_{max})}{|w_{max}|^2 w_{max} } |h|^2 \right) \\
&={2 g_1 \over (K_c^+)^2 \mu_{max}} h
\left( \epsilon + \frac{g''(0)}{\pi^3 g(0)^4 \mu_{max}^3} C(x) |h|^2 \right).
\end{split}
\ee
Equation \eqref{slow-h} shows that the trivial solution (the incoherent state) looses stability 
at $\epsilon=0$ and for small $\epsilon>0$ there is a nonzero stable equilibrium
\be\lbl{pitchfork}
\begin{split}
\left| h_{\infty}\right|&= \sqrt{\frac{-8}{(K_c^+)^4 \mu_{max}g_2} 
  \frac{|w_{max}|^2 w_{max}}{\tilde \bPi_{\mu_{max}} (|w_{max}|^2 w_{max})}} \cdot \sqrt{K-K_c^+} +o(\sqrt{K-K_c^+}) \\
&={ g(0)^2 \pi^{3/2} \over \sqrt{-g^{''}(0) }} \mu_{max}^{3/2} 
\sqrt{\frac{1}{C(x)}}\sqrt{K-K_c^+} +o(\sqrt{K-K_c^+}), \quad K>K_c^+.
\end{split}
\ee

\subsection{Examples}
In \cite{ChiMed16}, we derived the transition formulas for the 
onset of synchronization in the KM on several networks. We now return to these examples and 
describe the transition to synchronization in more detail using the results of this 
section.

We start with the KM on the Erd\H{o}s-R{\' e}nyi graphs. To this end,
let  $W\equiv p \in (0,1)$. In \cite{ChiMed16}, we showed that the largest positive
eigenvalue of $\mathbf{W}$ in this case is $\mu_{max}=p$. 
The corresponding eigenfunction $w_{max}$ is constant.  This yields the critical
value $K_c^+=2(\pi g(0)p)^{-1}$. By plugging in these values into \eqref{fork},
we obtain
\be\lbl{fork-ER}
h_\infty (K)= { g(0)^2 \pi^{3/2} \over \sqrt{-g^{''}(0) }} p^{3/2} \sqrt{K-K_c^+}
+ o(\sqrt{K-K_c^+}).
\ee

We next turn to the KM on small-world graphs. This family of graphs is defined via
the following graphon:
\be\lbl{def-Wpr}
W_{p,r}(x,y)=
\left\{ 
\begin{array}{ll}
1-p, & \min\{|x-y|,\, 1-|x-y|\} \leq r,\\
p, &\mbox{otherwise},
\end{array}
\right.
\ee
where $p,r\in (0,1/2)$ are two parameters. The former stands for the probability 
of long range random connections and the latter is the range of regular local connections
(cf.~\cite{Med14c}). 

The largest eigenvalue of $\mathbf{W}_{p,r}$ is equal to $2r+2p-4pr$ 
and the corresponding eigenfunction $w_{max}$ is constant (cf.~\cite{ChiMed16}).
This implies that the critical value is 
$$
K_c^+={2\over \pi g(0) ( 2r +p -4pr)}.
$$
Using \eqref{fork}, we further have
\be\lbl{fork-SW}
h_\infty(K)= { g(0)^2 \pi^{3/2} \over \sqrt{-g^{''}(0) }} (2r+2p-4pr)^{3/2}
\sqrt{K-K_c^+}
+ o(\sqrt{K-K_c^+}).
\ee

\section{Bifurcation with a two-dimensional null space}\lbl{sec.bif2}
\setcounter{equation}{0}

\subsection{The slow manifold reduction}
Many networks in applications can be described with the limiting graphon of the following form
\be\lbl{Wsymmetry}
W(x,y)=G(x-y)
\ee 
for some $G\in L^2(\SS)$ such that $G(x) = G(-x)$. The graphons of this form are used in the description of the small-world
and many other networks (cf.~\S 5.3 \cite{ChiMed16}).

A graphon satisfying \eqref{Wsymmetry} admits Fourier series expansion
\be\lbl{Fseries}
W(x,y)=\sum_{k\in\Z} c_k e^{2\pi\1 k(x-y)}, \; c_{-k}=c_k\in \R.
\ee
By Parseval's identity,
\be\lbl{Parseval}
\sum_{k\in \Z} c_k^2= (2\pi)^{-1} \|G\|^2_{L^2(\SS)}<\infty.
\ee

It follows from \eqref{Fseries} that the eigenvalues of the kernel operator 
$\bW$ coincide with the Fourier coefficients
$c_k, \; k\in\Z$. The Fourier modes $e^{2\pi\1 kx}, \; k\in \Z,$ 
yield the corresponding eigenfunctions.

We continue to assume that the largest eigenvalue of $\bW$ is positive, i.e.,  there is at least 
one positive coefficient $c_k$. In view of \eqref{Parseval}, there is 
a finite set 
\be\lbl{maximal}
M=\left\{ m\in\Z:\;c_m=\sup\{ c_k:\; k\in\Z\}\right\}.
\ee
If $M=\{0\}$ the null space of $\bW$ is one-dimensional. This case was analyzed in the previous section.
Here, we assume $|M|=2$, i.e., there exists a unique $m\in \N$ such that
$$
\mu_{max} = \sup\{ c_k:\; k\in\Z\} = c_m= c_{-m}.
$$
The corresponding eigenspace is spanned by $w_+= e^{2\pi\1 mx}$ and $w_-= e^{-2\pi\1 mx}.$

From now on, the slow manifold reduction proceeds along the lines of the analysis in Section~\ref{sec.bif}.
The generalized center subspace of $\bT_0$ is spanned by
\be\lbl{2center}
v_\pm ={K_c^+\over 2} \wlim {w_\pm (x)\over \lambda -\1 \omega}.
\ee
On the center manifold, we adopt the following Ansatz
\begin{eqnarray}\lbl{2AN}
z_1 &=& \alpha \left( c_-(t) v_- + c_+(t) v_+\right) + O(\alpha^2),\\
z_k &=& h_k(z_1) = O(\alpha^2), \quad k=2,3,\dots, \\
\epsilon &=& \alpha^2, 
\end{eqnarray}
where $(c_-, c_+)$ is the coordinate along the center manifold.
Following the lines of Lemma~\ref{lem.order} and \ref{lem.z2}, we obtain
\be\lbl{2h}
h(t,x)= \alpha\left( c_-(t)w_-(x)+c_+(t)w_+(x)\right)+O(\alpha^2)
\ee
and 
\be\lbl{2z_2}
z_2 =\left[{\alpha K_c^+\over 2} \left(  c_-(t)w_-(x)+c_+(t)w_+(x)\right)\right]^2 
         \cdot \wlim {1\over (\lambda -\1\omega)^2}+ O(\alpha^3).
\ee

In analogy to \eqref{project-center} and \eqref{collect},  projection of
 \eqref{dZ1-2} onto the center subspace yields
\be\lbl{2proj-center}
\alpha ( \dot c_-v_- +\dot c_+v_+ ) = {\epsilon\over 2} \bPi_0 h -
{K\over 2} \bPi_0 (\overline{h} z_2).
\ee
As in \eqref{Pih-1} and \eqref{Pihz2-1}, we further obtain
\be\lbl{2Pi0h}
\bPi_0 h = {\alpha g_1\over \mu_{max}} \left({2\over K^+_c}\right)^{2} \left( c_-v_- + c_+v_+\right)
+O(\alpha^2).
\ee
and
\be\lbl{2Pi0barh}
\bPi_0(\bar h z_2)=-\alpha ^3 g_1g_2 \left( \frac{K_c^+}{2}\right)^2 \cdot \wlim \frac{1}{\lambda -i\omega } 
\tilde\bPi_{\mu_{max}} ((c_-w_-+c_+w_+)^2 (\overline{c_-w_-+c_+w_+})) + O(\alpha^4).
\ee
Taking into account that $w_\pm=e^{\pm 2\pi \1 mx}$, we compute
\be\lbl{2Pi0}
\tilde\bPi_{\mu_{max}}
((c_-w_-+c_+w_+)^2 (\overline{c_-w_-+c_+w_+}))\\
=\left( c_-|c_-|^2 + 2c_-|c_+|^2\right) w_-
+\left( c_+|c_+|^2 + 2c_+|c_-|^2\right) w_+.
\ee

Plugging \eqref{2Pi0} into \eqref{2Pi0barh}, we obtain
\be\lbl{2Pi0barh=}
\bPi_0(\bar h z_2) = -{\alpha^3 g_1g_2 K_c^+\over 2}
\left(\left( c_-|c_-|^2 + 2c_-|c_+|^2 \right) v_-
+\left( c_+|c_+|^2 + 2c_+|c_-|^2\right) v_+  \right)
+O(\alpha^4).
\ee

Combining \eqref{2proj-center}, \eqref{2Pi0h}, and \eqref{2Pi0barh=}, and by comparing
the coefficients of $v_\pm$ on both sides of the resultant equation, we arrive at 
the following system of equations 
\be\lbl{2dreduction}
\left\{ \begin{array}{ll} 
\dot c_- &= p_1 c_-\left(\epsilon - \alpha ^2p_2 \left( |c_-|^2+2 |c_+|^2\right)\right) + O(\alpha^3),\\
\dot c_+ &= p_1 c_+\left(\epsilon - \alpha ^2p_2 \left( |c_+|^2+2 |c_-|^2\right)\right) + O(\alpha^3),
\end{array}
\right.
\ee
where 
\be\lbl{p1-p2}
p_1 ={ 2g_1 \over (K_c^+)^2 \mu_{max}}, \quad
p_2 = -2g_2 \left( {K_c^+\over 2} \right)^4 \mu_{max}
\ee
are positive constants (see Theorem \ref{thm.Riesz}).

Using the polar form for $c_\pm =r_{\pm} e^{\1\phi_\pm}$, we rewrite  
\eqref{2dreduction}
as follows
\be\lbl{r-phi-reduction}
\left\{ \begin{array}{ll} 
\dot r_- &=  p_1r_-\left(\epsilon - \alpha ^2p_2 \left( r_-^2+2 r_+^2\right)\right) + O(\alpha^3),\\
\dot r_+ &=  p_1r_+\left(\epsilon - \alpha ^2p_2 \left( r_+^2+2 r_-^2\right)\right) + O(\alpha^3),\\
\dot \phi_- &=0,\\
\dot \phi_+ &=0.
\end{array}
\right.
\ee
Neglecting the higher order terms, we locate the fixed points
$$
(r_-,r_+)= (0,0), \;  (0, \sqrt{\epsilon/p_2\alpha ^2}), \; (\sqrt{\epsilon/p_2\alpha ^2}, 0), \; 
(\sqrt{\epsilon/3p_2\alpha ^2},\sqrt{\epsilon/3p_2\alpha ^2}).
$$
The linearization of \eqref{r-phi-reduction} about these fixed points yields
$$
\epsilon p_1 \begin{pmatrix}
1 & 0 \\ 0 & 1
\end{pmatrix}, \; 
\epsilon p_1 \begin{pmatrix}
-1& 0  \\ 0& -2 
\end{pmatrix}, \; 
\epsilon p_1
\begin{pmatrix}
-2& 0 \\ 
0& -1 
\end{pmatrix}, \; 
{2\over 3}\epsilon p_1 
\begin{pmatrix}
-1 & -2 \\ 
-2 & -1 
\end{pmatrix}, 
$$
respectively. Thus, the second and the third fixed points are stable for $0<K-K_c^+\ll 1$.
This proves that the order parameter (\ref{2h}) tends to
\be\lbl{hinfty+}
h^+_\infty(x) = \sqrt{\frac{K-K_c^+}{p_2}}e^{i\phi} w_+(x) + o(\sqrt{K-K^+_c})
\ee
or
\be\lbl{hinfty-}
h^-_\infty(x) = \sqrt{\frac{K-K_c^+}{p_2}}e^{i\phi} w_-(x) + o(\sqrt{K-K^+_c})
\ee
as $t\to \infty$, where $\phi$ is a constant which depends on an initial condition.

\begin{figure}
\begin{center}
\textbf{a}~\includegraphics[height=1.8in,width=2.0in]{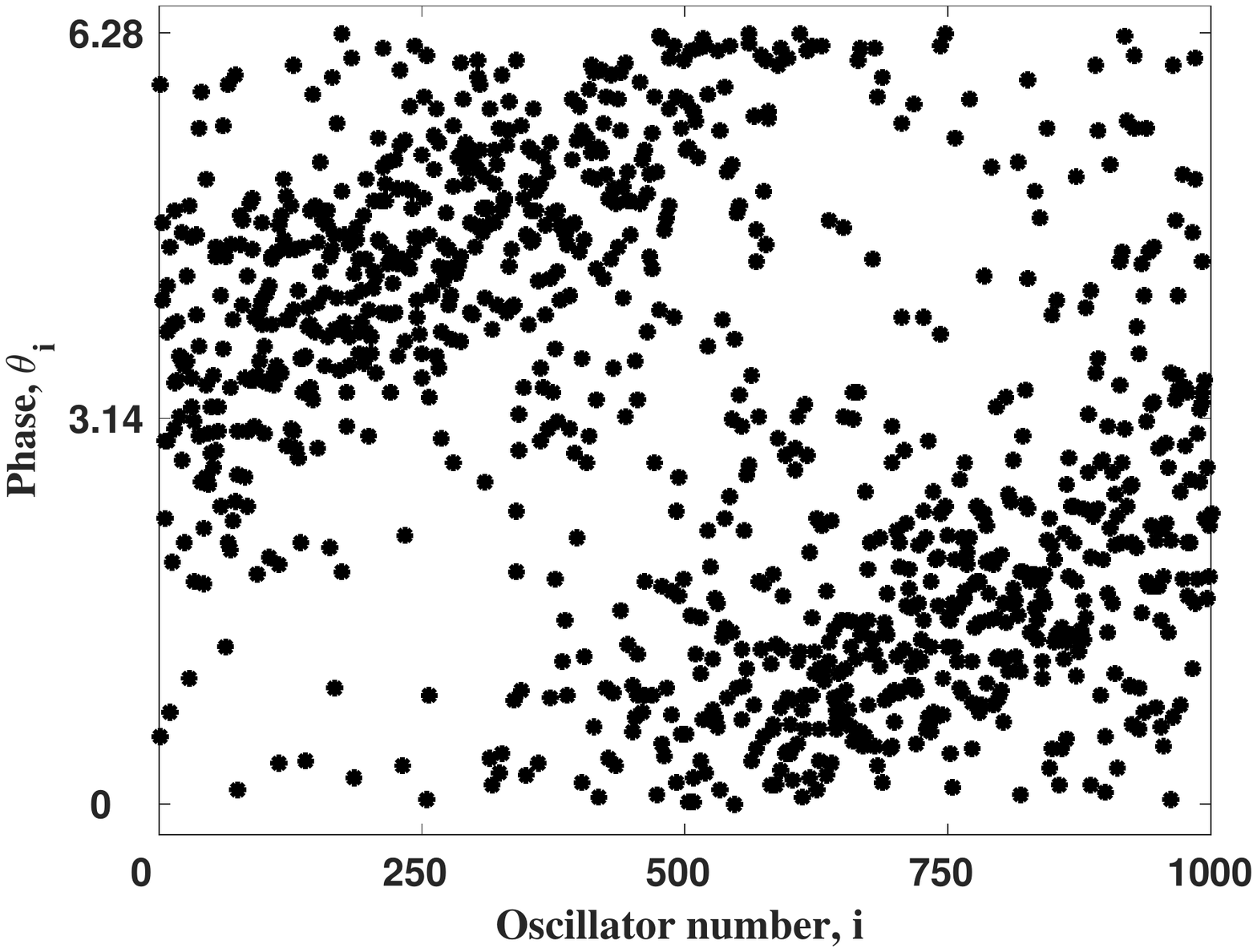}
\textbf{b}~\includegraphics[height=1.8in,width=2.0in]{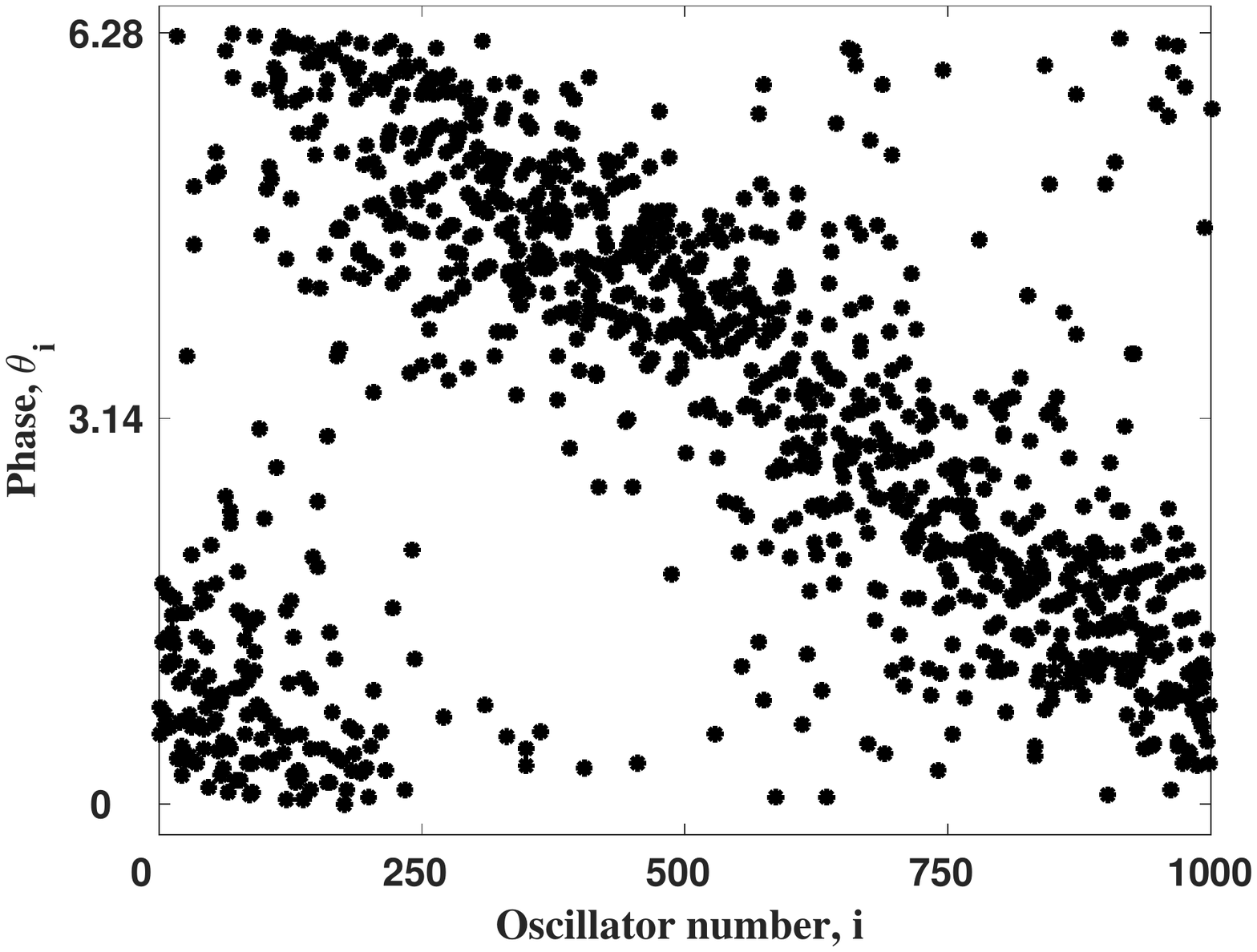}
\textbf{c}~\includegraphics[height=1.8in,width=2.0in]{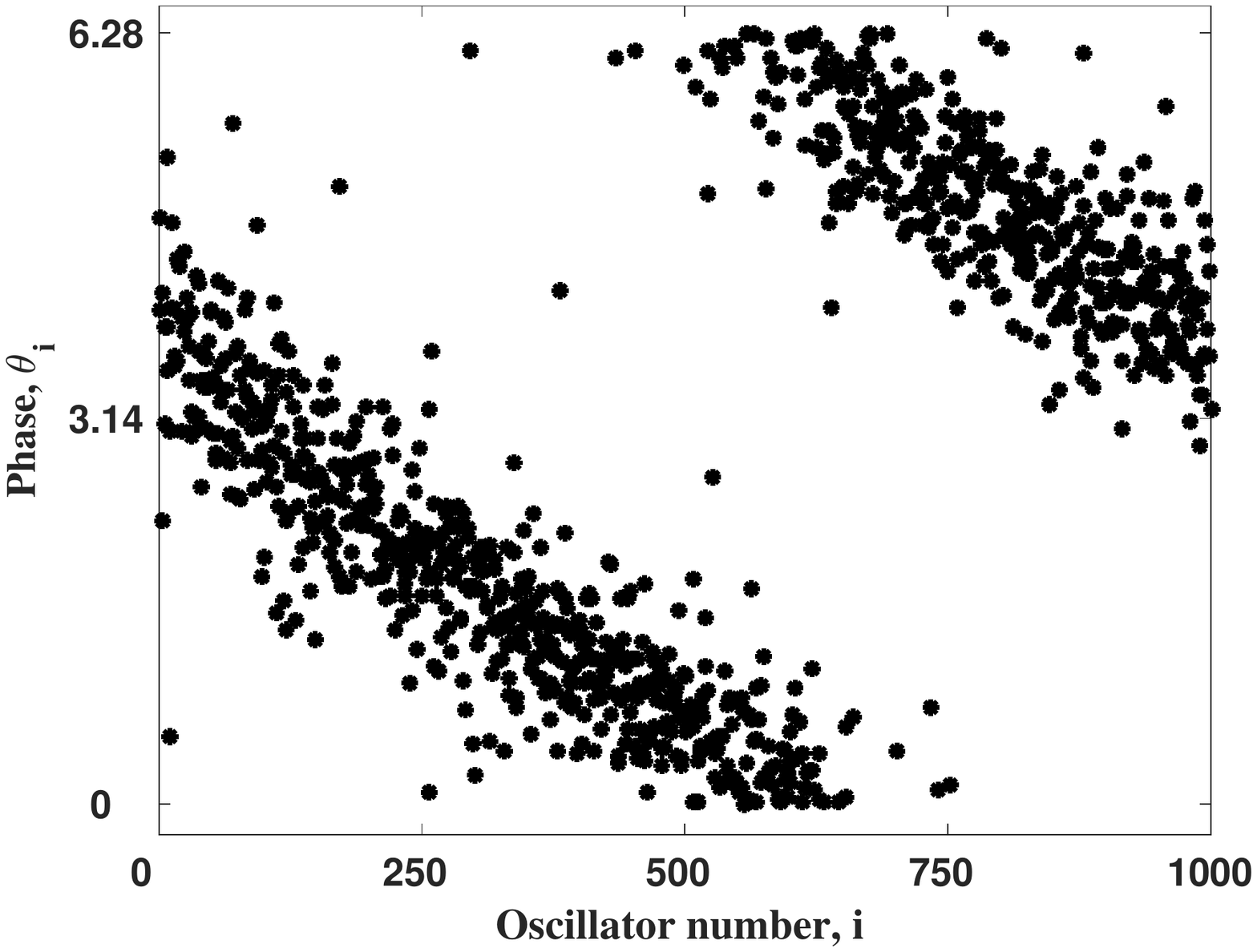}
\end{center}
\caption{Formation of partially phase-locked solutions near a bifurcation with two-dimensional
null space. The KM with intrinsic frequencies from the standard normal distribution, 
graphon \eqref{cos-kernel}, and random initial condition  was for suffiently large time
to reach a stationary regime.
The values of $K$ are  \textbf{a}) $3.5$, \textbf{b}) $4$, and \textbf{c}) $5$.  
The asymptotic state in (\textbf{a}) combines oscillators grouped around a $1$-twisted 
state with those distributed randomly around $\SS$. For increasing values of $K$, the 
noisy twisted states become more distinct (\textbf{b}, \textbf{c}).
}\lbl{f.3}
\end{figure}

\subsection{Example}

To illustrate the bifurcation with two-dimensional null space, let
\be\lbl{cos-kernel}
W(x,y))=\cos\left( 2\pi (x-y)\right)={1\over 2} e^{-2\pi\1 (x-y)} +{1\over 2} e^{2\pi\1 (x-y)}.
\ee
The only eigenvalue of $\bW$ is $\mu=0.5$ and the corresponding
 eigenfunctions are
$$
w_\pm(x)=e^{\pm2\pi\1 x}.
$$

The analysis of this section then yields two stable branches of solutions bifurcating 
from $h\equiv 0$ at $K_c^+=4(\pi g(0))^{-1}\approx 3.2$ (cf.~\eqref{hinfty+}, \eqref{hinfty-}):
$$
h_\infty (x) = \sqrt{\frac{\kappa}{p_2}} e^{\1(\pm 2\pi x+\phi)} + o(\sqrt{\kappa}), \quad
0<\kappa=K-K^+_c\ll 1, \;p_2 = -\frac{8g''(0)}{\pi^3 g(0)^4},
$$
where the phase shift $\phi$ is determined from the initial condition.
For small $\kappa>0$, the system has a family of stable partially phase-locked solutions
\eqref{part-sync}, which can be described as follows. The oscillators split into two groups
depending on their intrinsic frequencies. If $|\omega_i| < K\sqrt{\frac{\kappa}{p_2}},$
the oscillator $i$ approaches one of the two phase-locked solutions:
\be\lbl{ntwist}
\theta_i= {\pm 2\pi i\over n} + \phi + Y(\omega_i)+o(\sqrt{\kappa}),
\ee
where $Y(\omega_i)=\arcsin\left(\frac{\omega_i }{K}\sqrt{\frac{p_2}{\kappa}}\right)
\in (-\pi/2, \pi/2)$ is a function of the random intrinsic frequency $\omega_i$. 
The oscillators in this group form a noisy $1$-twisted state \cite{MedTan15b}. 
The oscillators with intrinsic frequencies 
$|\omega| > K\sqrt{\frac{\kappa}{p_2}}$ are randomly distributed around $\SS$.
The density of this distribution is given in the second line of \eqref{part-sync}.

Figure~\ref{f.3} presents results of numerical integration of the KM with graphon
\eqref{cos-kernel} and randomly distributed intrinsic frequencies. The plots
in Figure~\ref{f.3} \textbf{a}-\textbf{c} show asymptotic states of the KM for
three increasing values of $K,$ starting with $K=3.5$  just near the 
critical value $K_c^+\approx 3.2$. In Figure~\ref{f.3}\textbf{a} there are many oscillators
spread around $\SS$. However, the group of oscillators concentrating about a $1$-twisted 
state is already visible. For larger values of $\kappa$, the twisted state becomes more
pronounced (see Figure~\ref{f.3}\textbf{b},\textbf{c}). Twisted states bifurcating 
from the incoherent state are also present in the KM on small-world graphs 
(see \cite{ChiMedMiz18} for the analysis of the small-world network and other examples).

\vskip 0.2cm
\noindent
{\bf Acknowledgements.}
This work was supported in part by the NSF  DMS  grant 1715161 (to GM).


\def\cprime{$'$} \def\cprime{$'$} \def\cprime{$'$}
\providecommand{\bysame}{\leavevmode\hbox to3em{\hrulefill}\thinspace}
\providecommand{\MR}{\relax\ifhmode\unskip\space\fi MR }
\providecommand{\MRhref}[2]{%
  \href{http://www.ams.org/mathscinet-getitem?mr=#1}{#2}
}
\providecommand{\href}[2]{#2}

\end{document}